\newtheorem{Theorem}{Theorem}[section]
\newtheorem{Lemma}{Lemma}[section]
\newtheorem{Proposition}{Proposition}[section]
\theoremstyle{definition}
\newtheorem{Definition}{Definition}[section]
\theoremstyle{remark}
\newtheorem{Remark}{Remark}[section]
\numberwithin{equation}{section}
\renewcommand{\r}{\rho}
\def\vr{\varrho}
\renewcommand{\u}{{\bf u}}
\newcommand{\R}{{\mathbb R}}
\newcommand{\Dv}{{\rm div}}
\newcommand{\dl}{\delta}
\def\f{\frac}
\renewcommand{\O}{\Omega}
\def\D{\Delta }
\def\hf1{^\f{1}{1-\xi^2}}
\def\vr{\varrho}
\def\be{\begin{equation}}
\def\en{\end{equation}}
\def\bs{\begin{split}}
\def\es{\end{split}}
\renewcommand{\d}{{\bf d}}
\begin{document}

\title[Global Solution to Compressible Flow of
Liquid Crystals in 3D]{Global Solution to the Three-Dimensional Compressible Flow of
Liquid Crystals}

\author{Xianpeng Hu AND Hao Wu}

\address{Courant Institute of Mathematical
Sciences, New York University, New York, NY 10012.}
\email{xianpeng@cims.nyu.edu}
\address{School of Mathematical Sciences and Shanghai Key Laboratory for Contemporary Applied Mathematics, Fudan University, Shanghai 200433, PR China}
\email{haowufd@yahoo.com}

\keywords{Compressible liquid crystal flow, global well-posedness, critical space}
\subjclass[2000]{35A05, 76A10, 76D03.}
\date{\today}
\thanks{X.-P Hu was partially supported by DMS-1108647. H. Wu was partially supported by NSF of
China 11001058, SRFDP and ``Chen Guang" project supported by Shanghai
Municipal Education Commission and Shanghai Education Development
Foundation.}

\begin{abstract}
The Cauchy problem for the three-dimensional compressible flow of
nematic liquid crystals is considered. Existence and uniqueness of the
global strong solution are established in critical Besov spaces provided that the initial datum is close to an equilibrium state $(1,{\bf 0},
\hat{\d})$ with a constant vector $\hat{\d}\in S^2$. The global
existence result is proved via the local well-posedness and uniform estimates for proper linearized systems with convective terms.
\end{abstract}

\maketitle

\section{Introduction}
Liquid crystals are substances that exhibit a phase of matter that
has properties between those of a conventional liquid, and those
of a solid crystal. The three-dimensional flow of nematic liquid
crystals can be governed by the following system of partial
differential equations \cite{Gen,Lin2}:

\begin{subequations}\label{e1}
\begin{align}
& \f{\partial\r}{\partial t}+\Dv(\r\u)=0\quad
\label{e10}\\
&\f{\partial(\r\u)}{\partial
t}+\Dv(\r\u\otimes\u)-\mu\D\u-(\mu+\lambda)\nabla\Dv\u+\nabla
P(\r)\\
&\quad=-\xi\Dv\left(\nabla \d\odot\nabla\d-\f{1}{2}|\nabla \d|^2I\right)\quad\label{e11}\\
&\f{\partial\d}{\partial
t}+\u\cdot\nabla\d=\theta\left(\D\d+|\nabla\d|^2\d\right),\quad\label{e12}
\end{align}
\end{subequations}
where $\rho\in \mathbb{R}$ is the density function of the fluid, $\u\in\R^3$ is the velocity and $\d\in S^2$ represents the director
field for the averaged macroscopic molecular orientations. The scalar function $P\in\R$ is the pressure, which is an increasing and convex function in $\r$. They all depend on the spatial variable
 $x=(x_1,x_2,x_3)\in\R^3$ and the time variable $t>0$.
The constants $\mu$ and $\lambda$ are shear viscosity and the bulk viscosity coefficients of the fluid respectively that satisfy the physical assumptions $\mu>0, 2\mu+3\lambda\geq 0$. The constants
$\xi>0, \theta>0$ stand for the competition
between the kinetic energy and the potential energy, and the
microscopic elastic relaxation time (or the Debroah number) for the
molecular orientation field, respectively. The symbol $\otimes$ denotes the Kronecker tensor
product such that $\u\otimes \u= (\u_i \u_j)_{1\leq i,j\leq 3}$ and the $\nabla\d\odot\nabla\d$ denotes a matrix
whose $ij$-th entry ($1\leq i,j\leq 3$) is $\partial_{x_i} \d \cdot \partial_{x_j}\d$. $I$ is the $3\times 3$ identity matrix.
To complete the system \eqref{e1}, the initial data are given by
\begin{equation}\label{ic}
\r|_{t=0}=\r_0(x),\quad\u|_{t=0}=\u_0(x),\quad \d|_{t=0}=\d_0(x), \ \ \text{with}\  \d_0\in S^2.
\end{equation}

Roughly speaking, the system \eqref{e1} is a
coupling between the compressible Navier--Stokes equations and a
transported heat flow of harmonic maps into $S^2$. It is
a macroscopic continuum description of the evolution for the liquid crystals of nematic type under
the influence of both the flow field $\u$, and the macroscopic description of the
microscopic orientation configurations $\d$ of (rod-like) liquid crystals.

The hydrodynamic theory of liquid crystals in nematic case has
been established by Ericksen and Leslie, see \cite{Eri, Eri2,
Leslie1, Leslie2}. Since then, the mathematical theory is still
progressing and the study of the full Ericksen--Leslie model
presents relevant mathematical difficulties. In \cite{Lin2}, Lin
introduced a simplification of the general Ericksen--Leslie system
that keeps many of the mathematical difficulties of the original
system by using a Ginzburg--Landau approximation to relax the
nonlinear constraint $\d\in S^2$. Later in \cite{LL}, Lin and Liu
showed the global existence of weak solutions and smooth solutions
for that approximation system. For more results on the
approximation system, we refer to \cite{LL96, S01, W10}. Recently,
Hong \cite{Hong} and Lin--Lin--Wang \cite{LLW} showed
independently the global existence of weak solution of an
incompressible model of system \eqref{e1} in two dimensional
space. Moreover, in \cite{LLW}, the regularity of solutions except
for a countable set of singularities whose projection on the time
axis is a finite set had been obtained. In the recent work
\cite{WC}, Wang established a global well-posedness theory for the
incompressible liquid crystals for rough initial data, provided
that
$$\|\u_0\|_{BMO^{-1}}+[\d_0]_{BMO}\le \varepsilon_0$$
for some $\varepsilon_0>0$. Note that the relationship between
$BMO^{-1}$ and $\dot{H}^{\f12}$ is (see \cite{BP})
$$\dot{H}^{\f12}\hookrightarrow L^3\hookrightarrow
B^{-1+\f{3}{p}}_{p,\infty}\hookrightarrow BMO^{-1}, \quad \text{for}\  3\le
p<\infty.$$

Concerning the compressible case, local existence of unique strong
solutions of \eqref{e1} was proved provided that the initial data
$\r_0, \u_0, \d_0$ are sufficiently regular and satisfy a natural
compatibility condition in a recent work \cite{HWW2}. A criterion
for possible breakdown of such a local strong solution at finite
time was given in terms of blow up of the $L^\infty$-norms of
$\rho$ and $\nabla \d$. In \cite{WC1}, an alternative blow-up
criteria was derived in terms of the $L^\infty$-norms of
$\nabla\u$ and $\nabla\d$. The global existence of weak solutions
to \eqref{e1} with large initial data is still an outstanding open
problem for high dimensions. By so far, results in one space
dimension have been obtained in \cite{DWW, DLWW}, and authors in
\cite{JJW} consider a multidimensional version with small energy.

In this paper, we are interested in the existence and uniqueness of global strong
solutions to the Cauchy problem of \eqref{e1} with initial data \eqref{ic} in the three dimensional space. It is
difficult to find a functional space such that the system
\eqref{e1}--\eqref{ic} is well-posed globally in time. To this
end, we notice that the system \eqref{e1} is invariant under the
following transformations
\begin{equation}\label{si}
\tilde{\r}=\r(l^2t, lx),\quad\tilde{\u}=l\u(l^2t,lx), \quad
\tilde{\d}=\d(l^2t, lx)
\end{equation}
with the modification of the pressure $\tilde{P}=l^2P$. A critical
space is a space in which the norm is invariant under the scaling
$$(\tilde{e},\tilde{\mathbf{f}},\tilde{\mathbf{g}})(x)=(e(lx),l \mathbf{f}(lx), \mathbf{g}(lx)).$$ The scaling invariance \eqref{si} reminds us of a
similar property of three dimensional incompressible Navier--Stokes equations,
which provides a well-known global existence of solutions with
small data in the homogeneous Sobolev space $\dot{H}^{\f{1}{2}}$ (see
\cite{Kato64}). Motivated by this observation, we aim at a global
well-posedness of the system \eqref{e1}--\eqref{ic} with small
initial data in a functional framework where the function space
for the velocity $\u$ is similar to $\dot{H}^{\f{1}{2}}$. According to
the scaling \eqref{si}, the regularity of the density $\r$ and the
director field $\d$ is one order higher than that of the velocity $\u$, and
hence a function space which is similar to $\dot{H}^{\f32}$ would be
a candidate. Unfortunately, the function space
$\dot{H}^{\f{3}{2}}$ does not turn to be a good candidate for $\r$
and $\d$, since bounds of the director field $\d$ in
$\dot{H}^{\f{3}{2}}$ do not automatically imply the $L^\infty$
bound of $\d$. To overcome this difficulty, inspired by
\cite{RD2} for the compressible Navier--Stokes equations, it seems more natural to work in the framework of
homogeneous Besov space $B^{\f{3}{2}}_{2,1}$ since
$B^{\f32}_{2,1}$ is continuously embedded into $L^\infty$.
Furthermore, as in \cite{RD2}, the different dissipative
mechanisms of low frequencies and high frequencies inspire us to
deal with $\r$ and $\d$ in
$\tilde{B}^{\f12,\f32}_{2,1}=B^{\f32}_{2,1}\cap B^{\f12}_{2,1}.$

The rest of this paper is organized as follows. In Section 2, we
will give a series of fundamental properties of the Besov's spaces. In
Section 3, we reformulate the system \eqref{e1}--\eqref{ic} and state our main
result (Theorem \ref{MT}). The main goal of Section 4 is to prove uniform estimates for linearized systems to \eqref{e1}, while the global existence is
obtained in Section 5. In Section 6, the uniqueness of global strong solution is verified.

\bigskip

\section{Preliminaries}

Throughout this paper,  we use $C$ for a generic constant, and
denote  $A\le CB$ by  $A\lesssim B$. The notation $A\thickapprox
B$ means that $A\lesssim B$ and $B\lesssim A$. Also we use
$(\alpha_q)_{q\in\mathbb{Z}}$ to denote a sequence such that
$\sum_{q\in\mathbb{Z}}\alpha_q\le 1$. $(f|g)$ denotes the inner
product of two functions $f, g$ in $L^2(\R^N)$. The standard
summation notation over the repeated index will be adopted in the remaining part of this
paper.

In order to state our existence result, we introduce some
functional spaces and explain the notations. Let $T>0$,
$r\in[0,\infty]$ and $X$ be a Banach space. We denote by
$\mathcal{M}(0,T;X)$ the set of measurable functions on $(0,T)$
valued in $X$. For $f\in \mathcal{M}(0,T; X)$, we define
$$\|f\|_{L^r_T(X)}=\left(\int_0^T\|f(\tau)\|_X^rd\tau\right)^{\f{1}{r}}\textrm{
if }r<\infty,$$
$$\|f\|_{L^\infty_T(X)}=\sup \textrm{ess}_{\tau\in(0,T)}\|f(\tau)\|_X.$$
Denote $L^r(0,T;X)=\{f\in
\mathcal{M}(0,T;X)|\|f\|_{L^r_T(X)}<\infty\}$. If $T=\infty$, we
denote by $L^r(\R^+; X)$ and $\|f\|_{L^r(X)}$ the corresponding
spaces and norms, respectively. $C([0,T],X)$ (or $C(\R^+,X)$) stands for the
set of continuous X-valued functions on $[0,T]$ (resp. $\R^+$) while $C_b(\R^+;X)$ is the set of bounded
continuous X-valued functions. For $\alpha\in(0,1)$,
$C^\alpha([0,T];X)$ (or $C^\alpha(\R^+;X)$) stands for the set of
H\"{o}lder continuous functions in time with order $\alpha$,
i.e., for every $t,s$ in $[0,T]$ (resp. $\R^+$), we have
$$\|f(t)-f(s)\|_X\lesssim |t-s|^\alpha.$$

As in \cite{RD2}, we introduce a
function $\psi\in C^\infty(\R^N)$, supported in
$\mathcal{C}=\{\xi\in\R^N: \f{5}{6}\le|\xi|\le\f{12}{5}\}$ and
such that
$$\sum_{q\in\mathbb{Z}}\psi(2^{-q}\xi)=1\textrm{ if }\xi\neq 0.$$ Let
$\mathcal{F}$ be the Fourier transform. Denoting $\mathcal{F}^{-1}\psi$ by $h$, we define the dyadic
blocks as follows:
$$\D_q f=\psi(2^{-q}D)f=2^{qN}\int_{\R^N}h(2^qy)f(x-y)dy,\quad \text{and}\ \
S_q f=\sum_{p\le q-1}\D_pf.$$
Then the formal decomposition
\begin{equation}
f=\sum_{q\in\mathbb{Z}}\D_qf\nonumber
\end{equation}
is called homogeneous Littlewood--Paley decomposition.

For $s\in\R$ and $f\in \mathcal{S}'(\R^N)$, we denote
$$\|f\|_{\dot{B}^s_{p,r}}\overset{def}{=}\left(\sum_{q\in\mathbb{Z}}2^{sqr}\|\D_qf\|^r_{L^p}\right)^{\f{1}{r}}.$$
When $p=2$ and $r=1$, we denote $\|\cdot\|_{\dot{B}^s_{p,r}}$ by
$\|\cdot\|_{B^s}$. The definition of the homogeneous Besov space is built on the
homogeneous Littlewood--Paley decomposition (cf. \cite[Definition 1.2]{RD2})
\begin{Definition}
Let $s\in\R$, and $m=-\left[\f{N}{2}+1-s\right]$. If $m<0$, we set
$$B^s=\left\{f\in \mathcal{S}'(\R^N)|\|f\|_{B^s}<\infty\textrm{
and }f=\sum_{q\in\mathbb{Z}}\D_qf\textrm{ in
}\mathcal{S}'(\R^N)\right\}.$$ If $m\ge 0$, we denote by
$\mathcal{P}_m$ the set of $N$ variables polynomials of degree
$\le m$ and define
$$B^s=\left\{f\in \mathcal{S}'(\R^N)/\mathcal{P}_m|\|f\|_{B^s}<\infty\textrm{
and }f=\sum_{q\in\mathbb{Z}}\D_qf\textrm{ in
}\mathcal{S}'(\R^N)/\mathcal{P}_m\right\}.$$
\end{Definition}

Functions in the homogeneous Besov space $B^s$ has many good properties (see \cite[Proposition 2.5]{RD}):
\begin{Proposition}\label{p3}
The following properties hold:
\begin{itemize}
\item Density: the set $C_0^\infty$ is dense in $B^s$ if $|s|\le
\f{N}{2}$;
\item Derivation: $\|f\|_{B^s}\thickapprox \|\nabla f\|_{B^{s-1}}$;
\item Fractional derivation: let $\Lambda=\sqrt{-\D}$ and
$\sigma\in\R$; then the operator $\Lambda^\sigma$ is an isomorphism
from $B^s$ to $B^{s-\sigma}$;
\item Algebraic properties: for $s>0$, $B^s\cap L^\infty$ is an
algebra;
\item Interpolation: $(B^{s_1}, B^{s_2})_{\theta,1}=B^{\theta
s_1+(1-\theta)s_2}$, for $s_1, s_2\in \mathbb{R}$ and $\theta\in (0,1)$.
\end{itemize}
\end{Proposition}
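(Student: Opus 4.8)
The plan is to reduce every assertion to the weighted-$\ell^1$ structure of the norm $\|f\|_{B^s}=\sum_{q}2^{sq}\|\D_q f\|_{L^2}$ together with the fact that each block $\D_q f$ has Fourier support in the annulus $2^q\mathcal{C}$. Because we work at the $L^2$ level, the frequency-localized estimates are exact via Plancherel, which makes the derivation and fractional-derivation statements immediate and reduces the remaining properties to standard paraproduct and interpolation machinery.

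For the derivation property, since $\D_q$ commutes with $\na$ and $\widehat{\D_q f}$ is supported where $\frac56 2^q\le|\xi|\le\frac{12}5 2^q$, Plancherel gives $\|\na\D_q f\|_{L^2}\thickapprox 2^q\|\D_q f\|_{L^2}$; summing the weighted series yields $\|\na f\|_{B^{s-1}}\thickapprox\|f\|_{B^s}$. The fractional case is identical: on the same annulus the symbol $|\xi|^\sigma$ of $\Lambda^\sigma$ satisfies $|\xi|^\sigma\thickapprox 2^{q\sigma}$ for every real $\sigma$, so $\|\Lambda^\sigma\D_q f\|_{L^2}\thickapprox 2^{q\sigma}\|\D_q f\|_{L^2}$ and hence $\|\Lambda^\sigma f\|_{B^{s-\sigma}}\thickapprox\|f\|_{B^s}$. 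This shows $\Lambda^\sigma\colon B^s\to B^{s-\sigma}$ is bounded with bounded inverse $\Lambda^{-\sigma}$, that is, an isomorphism.

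For the algebra property I would use Bony's decomposition $fg=T_fg+T_gf+R(f,g)$, with the paraproduct $T_fg=\sum_q S_{q-1}f\,\D_q g$ and the remainder $R(f,g)=\sum_q\D_q f\,\tilde{\D}_q g$, where $\tilde{\D}_q=\D_{q-1}+\D_q+\D_{q+1}$. Each term $S_{q-1}f\,\D_q g$ is spectrally localized in an annulus of size $2^q$, and using $\|S_{q-1}f\|_{L^\infty}\lesssim\|f\|_{L^\infty}$ (the kernels defining $S_q$ being uniformly bounded in $L^1$) one gets $\|T_fg\|_{B^s}\lesssim\|f\|_{L^\infty}\|g\|_{B^s}$ for every $s$, and symmetrically for $T_gf$. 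The remainder is the delicate term: $\D_q f\,\tilde{\D}_q g$ is supported only in a ball of radius $\sim 2^q$, so each $\D_j R(f,g)$ collects all frequencies $q\ge j-c$, and the resulting sum $\sum_{q\ge j}2^{(j-q)s}$ converges precisely when $s>0$, giving $\|R(f,g)\|_{B^s}\lesssim\|f\|_{L^\infty}\|g\|_{B^s}$. Combining the three estimates yields $\|fg\|_{B^s}\lesssim\|f\|_{L^\infty}\|g\|_{B^s}+\|g\|_{L^\infty}\|f\|_{B^s}$, which is the algebra property; the positivity requirement $s>0$ enters exactly through the remainder.

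The interpolation identity I would obtain by the retraction method. The map $Rf=(\D_q f)_q$ sends $B^s$ into the weighted space $\ell^1_s(L^2)$ of sequences $(u_q)$ with $\sum_q 2^{sq}\|u_q\|_{L^2}<\infty$, while the reconstruction $\tilde R(u_q)=\sum_q\tilde{\D}_q u_q$ satisfies $\tilde R\circ R=\mathrm{Id}$; thus $B^s$ is a retract of $\ell^1_s(L^2)$. Since real interpolation commutes with retractions, the claim reduces to the classical weighted-sequence identity $(\ell^1_{s_1}(L^2),\ell^1_{s_2}(L^2))_{\theta,1}=\ell^1_{s_\theta}(L^2)$ with $s_\theta=\theta s_1+(1-\theta)s_2$, which follows from a direct $K$-functional computation. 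Finally, for density I would first truncate in frequency, noting $\|f-\sum_{|q|\le M}\D_q f\|_{B^s}=\sum_{|q|>M}2^{sq}\|\D_q f\|_{L^2}\to0$ by convergence of the defining series, and then approximate the band-limited remainder by compactly supported smooth functions via a spatial cutoff $\chi(\cdot/R)$ as $R\to\infty$. I expect the density statement to be the main obstacle: in the homogeneous setting, and modulo polynomials, one must control both the low-frequency truncation and the spatial localization, and the restriction $|s|\le\frac N2$ is precisely what guarantees that these operations act continuously on $B^s$ so that the approximation error vanishes.
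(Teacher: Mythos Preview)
The paper does not actually prove this proposition: it is stated with the parenthetical reference ``(see \cite[Proposition 2.5]{RD})'' and no argument is given. Your proposal supplies precisely the standard textbook proofs one finds in that reference (Bahouri--Chemin--Danchin), so in that sense you have done more than the paper does, and your arguments for derivation, fractional derivation, the algebra property via Bony's decomposition, and interpolation via retraction onto weighted $\ell^1$ sequence spaces are all correct and standard.

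One small remark on the density item: your sketch is the right outline, but you are appropriately cautious that it is the most delicate point. In the homogeneous setting the spatial cutoff $\chi(\cdot/R)$ does not obviously commute well with the quotient by polynomials, and making the low-frequency and spatial truncations interact correctly is exactly where the constraint $|s|\le N/2$ enters (one typically uses that $B^{N/2}\hookrightarrow L^\infty$ and its dual statement to control the cutoff error). This is handled carefully in \cite{RD}; your acknowledgement that this is the main obstacle is accurate, and a full write-up would need to spell out that step.
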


To deal with functions with different regularities for high
frequencies and low frequencies, motivated by \cite{RD2}, it is
more effective to work in \textit{hybrid Besov spaces}. We
remark that using hybrid Besov spaces has been crucial for proving
global well-posedness for compressible Navier--Stokes equations in critical spaces
(see \cite{RD, RD2}).
\begin{Definition}
Let $s,t\in \R$. We set
$$\|f\|_{\tilde{B}^{s,t}}=\sum_{q\le
0}2^{qs}\|\D_qf\|_{L^2}+\sum_{q>0}2^{qt}\|\D_qf\|_{L^2}.$$
For $m=-\left[\f{N}{2}+1-s\right]$, we define
\begin{eqnarray}
\tilde{B}^{s,t}&=&\left\{f\in\mathcal{S}'(\R^N)|\|f\|_{\tilde{B}^{s,t}}<\infty\right\},\qquad \textrm{
if}\ m<0,\nonumber\\
\tilde{B}^{s,t}&=&\left\{f\in\mathcal{S}'(\R^N)/\mathcal{P}_m|\|f\|_{\tilde{B}^{s,t}}<\infty\right\},\ \textrm{
if }m\ge 0.\nonumber
\end{eqnarray}
\end{Definition}

\begin{Remark}
Some remarks about the hybrid Besov spaces are in order:
\begin{itemize}
\item $\tilde{B}^{s,s}=B^s$;
\item If $s\le t$, then $\tilde{B}^{s,t}=B^s\cap B^t$. Otherwise,
$\tilde{B}^{s,t}=B^s+B^t$. In particular,
$\tilde{B}^{s,\f{N}{2}}\hookrightarrow L^\infty$ as $s\le
\f{N}{2}$;
\item The space $\tilde{B}^{0,s}$ coincides with the usual
nonhomogeneous Besov space
$$\left\{f\in\mathcal{S}'(\R^N)|\|\chi(D)f\|_{L^2}+\sum_{q\ge
0}2^{qs}\|\D_qf\|_{L^2}<\infty\right\},$$ where
$\chi(\xi)=1-\sum_{q\ge 0}\phi(2^{-q}\xi)$;
\item If $s_1\le s_2$ and $t_1\ge t_2$, then
$\tilde{B}^{s_1,t_1}\hookrightarrow\tilde{B}^{s_2,t_2}$.
\end{itemize}
\end{Remark}
We have the following properties for the product in hybrid Besov spaces (see \cite{RD}):
\begin{Proposition}\label{p22}
 For all $s_1, s_2>0$,
$$\|fg\|_{\tilde{B}^{s_1,s_2}}\lesssim
\|f\|_{L^\infty}\|g\|_{\tilde{B}^{s_1,s_2}}+\|g\|_{L^\infty}\|f\|_{\tilde{B}^{s_1,s_2}}.$$
For all $s_1, s_2\le\f{N}{2}$ such that $\min\{s_1+t_1,
s_2+t_2\}>0$,
$$\|fg\|_{\tilde{B}^{s_1+s_2-\f{N}{2},t_1+t_2-\f{N}{2}}}\lesssim
\|f\|_{\tilde{B}^{s_1,s_2}}\|g\|_{\tilde{B}^{t_1,t_2}}.$$
\end{Proposition}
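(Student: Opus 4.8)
The plan is to prove both product estimates by means of Bony's paraproduct decomposition, reducing each to weighted summations over the Littlewood--Paley blocks that are controlled by Bernstein's inequality and the spectral localization of the dyadic pieces. Writing $T_f g = \sum_q S_{q-1}f\,\D_q g$ and $R(f,g)=\sum_{|i-j|\le1}\D_i f\,\D_j g$, I decompose $fg = T_f g + T_g f + R(f,g)$ and estimate the three contributions to the hybrid norm of $fg$ separately. Two structural facts drive everything: each summand $S_{p-1}f\,\D_p g$ of a paraproduct has Fourier support in an annulus of size $\sim 2^p$, so $\D_{q'}(S_{p-1}f\,\D_p g)$ vanishes unless $|p-q'|\le N_0$ for a fixed $N_0$; whereas each summand $\D_i f\,\D_j g$ (with $|i-j|\le1$) of the remainder has Fourier support in a ball of size $\lesssim 2^i$, so $\D_{q'}$ of it survives only when $i\ge q'-N_0$. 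The entire difficulty is bookkeeping: in the hybrid norm the weight switches exponent at $q=0$, so the weighted sums must be split according to whether the relevant indices lie below or above the threshold, and the convergence of the resulting geometric series is exactly where the hypotheses on the exponents enter.

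For the first inequality I would first treat $T_f g$. Using $\|S_{p-1}f\|_{L^\infty}\lesssim\|f\|_{L^\infty}$ uniformly in $p$ together with the annulus localization, I obtain $\|\D_{q'} T_f g\|_{L^2}\lesssim\|f\|_{L^\infty}\sum_{|p-q'|\le N_0}\|\D_p g\|_{L^2}$. Multiplying by the hybrid weight and summing, the band $|p-q'|\le N_0$ only moves the weight by a bounded factor (straddling $q'=0$ forces both indices into a fixed bounded range), so $\|T_f g\|_{\tilde{B}^{s_1,s_2}}\lesssim\|f\|_{L^\infty}\|g\|_{\tilde{B}^{s_1,s_2}}$; the term $T_g f$ is symmetric and yields $\|g\|_{L^\infty}\|f\|_{\tilde{B}^{s_1,s_2}}$. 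For the remainder I bound $\|\D_i f\,\D_j g\|_{L^2}\lesssim\|\D_i f\|_{L^2}\|g\|_{L^\infty}$, so that $\|\D_{q'}R(f,g)\|_{L^2}\lesssim\|g\|_{L^\infty}\sum_{i\ge q'-N_0}\|\D_i f\|_{L^2}$. Attaching the hybrid weight and exchanging the order of summation reduces this to the norm $\|f\|_{\tilde{B}^{s_1,s_2}}$ up to a constant, the inner geometric series $\sum_{q'\le i+N_0}$ converging precisely because $s_1,s_2>0$, with a short case analysis across $q'=0$ handling the switch of exponent. Collecting the three bounds proves the estimate.

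For the second inequality the decomposition is identical, but now $f$ and $g$ are measured in Besov norms, so the $L^\infty$ factors must be regained through Bernstein's inequality at the cost of an $\tfrac{N}{2}$ loss. Concretely, $\|S_{p-1}f\|_{L^\infty}\lesssim\sum_{k\le p-2}2^{kN/2}\|\D_k f\|_{L^2}$, and the hypotheses $s_1,s_2\le\tfrac{N}{2}$ on the exponents of $f$ guarantee that, after attaching the hybrid weight, this quantity is controlled by the corresponding part of $\|f\|_{\tilde{B}^{s_1,s_2}}$; the surplus factor $2^{kN/2}$ is exactly what produces the shift $-\tfrac{N}{2}$ in the target indices. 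The paraproducts are then summed as before. For the remainder, Bernstein gives $\|\D_i f\,\D_j g\|_{L^2}\lesssim 2^{iN/2}\|\D_i f\|_{L^2}\|\D_j g\|_{L^2}$, and the summation over $i\ge q'-N_0$ converges precisely when the relevant exponent sums are positive, that is, under $\min\{s_1+t_1,s_2+t_2\}>0$.

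The step I expect to be the genuine obstacle is not any single estimate but the uniform control of these weighted double sums across the frequency threshold $q=0$. Because $\tilde{B}^{s,t}$ assigns different exponents to low and high frequencies, a low-frequency block of $f$ multiplied by a high-frequency block of $g$ (and conversely) feeds into output frequencies on either side of the threshold, and one must check that every resulting double sum still telescopes to the stated hybrid norm with a constant independent of the frequency cutoff. This amounts to a finite case analysis (relevant indices both $\le0$, both $>0$, or split) in which the sign conditions keep each geometric series summable and the conditions $s_i,t_i\le\tfrac{N}{2}$ keep the low-frequency paraproduct tails summable; once this bookkeeping is organized, Young's and Minkowski's inequalities applied to the convolution sums close the argument.
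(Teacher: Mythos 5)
First, a point of reference: the paper itself offers no proof of this proposition --- it is quoted from the cited literature (Danchin's work, \cite{RD}, \cite{RD2}) --- so your proposal must be measured against the standard proof there, and your overall route (Bony's decomposition $fg=T_fg+T_gf+R(f,g)$, annulus/ball spectral localization of the summands, weighted sums split at the frequency threshold) is exactly that proof. Your treatment of the first inequality is sound, and your paraproduct estimates in the second inequality are the right ones. Note, however, that the symmetric term $T_gf$ requires the restriction $t_1,t_2\le \frac{N}{2}$ on the indices of $g$ as well, which you gloss over with ``summed as before''; this is masked by the fact that the statement as printed in the paper is itself a misprint of Danchin's proposition, whose hypothesis constrains the indices of both factors and whose conclusion reads $\|fg\|_{\tilde{B}^{s_1+t_1-\frac{N}{2},\,s_2+t_2-\frac{N}{2}}}\lesssim \|f\|_{\tilde{B}^{s_1,s_2}}\|g\|_{\tilde{B}^{t_1,t_2}}$.

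The genuine gap is in your remainder estimate for the second inequality. You bound $\|\D_i f\,\D_j g\|_{L^2}\lesssim 2^{iN/2}\|\D_i f\|_{L^2}\|\D_j g\|_{L^2}$, placing the Bernstein loss at the \emph{input} frequency $2^i$. But the whole point of the remainder is that its summands have spectrum in balls $\{|\xi|\lesssim 2^i\}$, so they feed into output blocks $\D_{q'}$ with $q'$ arbitrarily far below $i$; with your bound, the weighted sum over low output frequencies contains the factor $\sum_{q'\le\min(0,\,i+N_0)}2^{q'(s_1+t_1-\frac{N}{2})}$, which diverges whenever $s_1+t_1\le \frac{N}{2}$. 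That regime is permitted by the hypothesis $\min\{s_1+t_1,s_2+t_2\}>0$ and is actually used in this paper: the uniqueness argument applies the product law with outputs in $\tilde{B}^{-\frac12,\frac12}$ (here $N=3$), i.e.\ with index sum $1<\frac32$. So your claim that the sums converge ``precisely when the relevant exponent sums are positive'' is false for the estimate you wrote down. The fix is the standard one: take the Bernstein gain at the \emph{output} frequency, i.e.\ use H\"older in $L^1$ and then the $L^1\to L^2$ Bernstein inequality on the block $\D_{q'}$,
$$\|\D_{q'}(\D_i f\,\D_j g)\|_{L^2}\lesssim 2^{q'N/2}\|\D_i f\,\D_j g\|_{L^1}\le 2^{q'N/2}\|\D_i f\|_{L^2}\|\D_j g\|_{L^2},$$
after which the low-frequency sum becomes $\sum_{q'\le\min(0,\,i+N_0)}2^{q'(s_1+t_1)}$ and converges exactly under the stated positivity hypothesis. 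With this single correction, together with the symmetric index condition for $T_gf$ noted above, your argument closes.
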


Throughout this paper, the following estimates for the convection
terms arising in the linearized systems will be used frequently (cf.
 \cite[Lemma 5.1]{RD2}).

\begin{Lemma}\label{cl}
Let $G$ be an homogeneous smooth function of degree $m$. Suppose
$-\f{N}{2}<s_i,t\le 1+\f{N}{2}$ for $i=1,2$. Then the following
three inequalities hold true:
\begin{equation}\label{25}
\begin{split}
&|(G(D)\D_q(\u\cdot\nabla f)|G(D)\D_q f)|\\
&\quad \leq C\alpha_q
2^{-q(\phi^{s_1,s_2}(q)-m)}
\|\u\|_{B^{1+\f{N}{2}}}\|f\|_{\tilde{B}^{s_1,s_2}}\left\|G(D)\D_q
f\right\|_{L^2},
\end{split}
\end{equation}
\begin{equation}\label{26}
\begin{split}
&\left|(G(D)\D_q(\u\cdot\nabla f)|\D_q g)+(\D_q(\u\cdot\nabla
g)|G(D)\D_q f)\right|\\
 &\quad\le
C\alpha_q\|\u\|_{B^{1+\f{N}{2}}}\Big(2^{-qt}\left\|G(D)\D_qf\right\|_{L^2}\|g\|_{B^t}+2^{-q(\phi^{s_1,s_2}(q)-m)}
\|f\|_{\tilde{B}^{s_1,s_2}}\|\D_qg\|_{L^2}\Big),
\end{split}
\end{equation}
where $\sum_{q\in\mathbb{Z}}\alpha_q\le 1$ and $C$ is a universal constant that only depends on $s_i, t, N$. The notation $\phi^{s,t}(q)$ means that for $q\in \mathbb{Z}$,
\begin{equation*}
\phi^{s,t}(q)\overset{def}=
\begin{cases}
s,\quad\textrm{if}\quad q\le 0,\\
t,\quad\textrm{if}\quad q\ge 1.
\end{cases}
\end{equation*}
\end{Lemma}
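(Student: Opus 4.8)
The plan is to peel off the genuine transport part of each inner product and reduce everything to a single commutator bound. Since $G(D)$ commutes with $\na$, we may write $G(D)\D_q(\u\cd\na f)=[G(D)\D_q,\u\cd\na]f+\u\cd\na\,G(D)\D_q f$, where $[G(D)\D_q,\u\cd\na]f:=G(D)\D_q(\u\cd\na f)-\u^k G(D)\D_q\del_k f$. The whole lemma then rests on an integration-by-parts bound for the transport terms together with the commutator estimate
\begin{equation}\label{key}
\|[G(D)\D_q,\u\cd\na]f\|_{L^2}\lesssim \alpha_q\,2^{-q(\phi^{s_1,s_2}(q)-m)}\|\u\|_{B^{1+\f{N}{2}}}\|f\|_{\tilde{B}^{s_1,s_2}},\qquad \sum_q\alpha_q\le1.
\end{equation}
I will freely use $\|\na\u\|_{L^\infty}+\|\Dv\u\|_{L^\infty}\lesssim\|\na\u\|_{B^{\f{N}{2}}}\approx\|\u\|_{B^{1+\f{N}{2}}}$, which follows from $B^{\f{N}{2}}\hookrightarrow L^\infty$ and the derivation property of Proposition \ref{p3}, together with the identity $\|G(D)\D_q f\|_{L^2}\lesssim 2^{qm}\|\D_q f\|_{L^2}=2^{-q(\phi^{s_1,s_2}(q)-m)}\alpha_q\|f\|_{\tilde{B}^{s_1,s_2}}$, where $\alpha_q:=2^{q\phi^{s_1,s_2}(q)}\|\D_q f\|_{L^2}/\|f\|_{\tilde{B}^{s_1,s_2}}$, coming from the homogeneity of $G$ and the definition of the hybrid norm.

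For \eqref{25} I first treat the transport term: integration by parts gives $(\u\cd\na\,G(D)\D_q f\,|\,G(D)\D_q f)=-\f12\int(\Dv\u)\,|G(D)\D_q f|^2$, so it is bounded by $\f12\|\Dv\u\|_{L^\infty}\|G(D)\D_q f\|_{L^2}^2$; replacing one factor $\|G(D)\D_q f\|_{L^2}$ by $2^{-q(\phi^{s_1,s_2}(q)-m)}\alpha_q\|f\|_{\tilde{B}^{s_1,s_2}}$ via the identity above produces exactly the right-hand side of \eqref{25}. The commutator term is dispatched by Cauchy--Schwarz, $|([G(D)\D_q,\u\cd\na]f\,|\,G(D)\D_q f)|\le\|[G(D)\D_q,\u\cd\na]f\|_{L^2}\|G(D)\D_q f\|_{L^2}$, followed by \eqref{key}. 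Summing the two contributions gives \eqref{25}.

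For \eqref{26} the two transport terms combine. With $g_q=\D_q g$ and $f_q=G(D)\D_q f$, the product rule yields $(\u\cd\na f_q\,|\,g_q)+(\u\cd\na g_q\,|\,f_q)=\int\u\cd\na(f_q\,g_q)=-\int(\Dv\u)\,f_q\,g_q$, which is $\le\|\Dv\u\|_{L^\infty}\|f_q\|_{L^2}\|g_q\|_{L^2}$; bounding $\|g_q\|_{L^2}\le 2^{-qt}\beta_q\|g\|_{B^t}$ with $\beta_q:=2^{qt}\|\D_q g\|_{L^2}/\|g\|_{B^t}$ (so $\sum_q\beta_q\le1$) matches the first term on the right of \eqref{26}. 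The two commutator terms are then estimated separately by Cauchy--Schwarz: $|([G(D)\D_q,\u\cd\na]f\,|\,g_q)|\le\|[G(D)\D_q,\u\cd\na]f\|_{L^2}\|g_q\|_{L^2}$ is controlled through \eqref{key} and reproduces the second term, while $|([\D_q,\u\cd\na]g\,|\,f_q)|\le\|[\D_q,\u\cd\na]g\|_{L^2}\|f_q\|_{L^2}$ is controlled through \eqref{key} applied with $G\equiv1$, $m=0$, $(s_1,s_2)=(t,t)$ (legitimate since $-\f{N}{2}<t\le1+\f{N}{2}$), giving the first term. Since a finite sum of sequences with $\sum_q\alpha_q\le1$ is again of this form after adjusting $C$, this closes \eqref{26}.

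The real work is \eqref{key}, which I would establish with Bony's decomposition $\u^k\del_k f=T_{\u^k}\del_k f+T_{\del_k f}\u^k+R(\u^k,\del_k f)$, where $T_ab=\sum_{q'}S_{q'-1}a\,\D_{q'}b$ and $R(a,b)=\sum_{|i-j|\le1}\D_ia\,\D_jb$. The low--high paraproduct $T_{\u^k}\del_k f$ is the only piece carrying genuine cancellation: by the support of $\psi$ only $|q'-q|\le N_0$ contribute to $G(D)\D_q$, and for these I use the kernel representation $[G(D)\D_q,b]h(x)=\int K_q(x-y)(b(y)-b(x))h(y)\,dy$ with $K_q(z)=2^{q(N+m)}\kappa(2^qz)$, $\kappa=\mathcal{F}^{-1}(G\psi)$. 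A first-order Taylor expansion of $b$ then yields the one-derivative gain $\|[G(D)\D_q,b]h\|_{L^2}\lesssim 2^{q(m-1)}\|\na b\|_{L^\infty}\|h\|_{L^2}$; taking $b=S_{q'-1}\u^k$, $h=\D_{q'}\del_k f$, using $\|\D_{q'}\del_k f\|_{L^2}\approx 2^{q'}\|\D_{q'}f\|_{L^2}$ and $\phi^{s_1,s_2}(q')\approx\phi^{s_1,s_2}(q)$ for $|q'-q|\le N_0$, the factors $2^{-q}$ and $2^{q'}\approx2^{q}$ cancel and reproduce the weight $2^{-q(\phi^{s_1,s_2}(q)-m)}$. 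The high--low paraproduct $T_{\del_k f}\u^k$, the remainder $R(\u^k,\del_k f)$, and the high-frequency-$\u$ part left over from the low--high piece carry no cancellation and are estimated directly by H\"older's and Bernstein's inequalities after spectral localization. The main obstacle is precisely the bookkeeping here: one must track the summable sequence $\alpha_q$ through the three interaction types and verify that the low/high split defining $\phi^{s_1,s_2}$ is respected uniformly in $q$ (including across the threshold $q=0$). The lower bounds $-\f{N}{2}<s_i,t$ guarantee convergence of the low-frequency (remainder) sums and the upper bounds $s_i,t\le1+\f{N}{2}$ that of the high-frequency sums, which is exactly why the stated range of indices appears.
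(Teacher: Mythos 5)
The first thing to say is that the paper contains no proof of this lemma to compare against: as the Remark immediately following the statement makes explicit, \eqref{25} and \eqref{26} are imported from Danchin \cite{RD2} (Lemma 5.1 there, inequalities (41) and (42)), up to the notation $\phi^{s,t}$. So the only meaningful benchmark is the proof in that reference, and your proposal is, in substance, a reconstruction of it: split each inner product into a pure transport part plus a commutator, kill the transport part by integration by parts (the identities $(\u\cd\na F|F)=-\f12\int(\Dv\u)|F|^2$ with $F=G(D)\D_qf$, and $(\u\cd\na F|G)+(\u\cd\na G|F)=-\int(\Dv\u)\,F\cd G$ with $G=\D_qg$), bound $\|\Dv\u\|_{L^\infty}$ via $B^{\f{N}{2}}\hookrightarrow L^{\infty}$, and reduce everything to a single commutator estimate proved through Bony's decomposition. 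Your reduction of \eqref{25} and \eqref{26} to that commutator bound is complete and correct, including the multiplier bound $\|G(D)\D_qf\|_{L^2}\lesssim 2^{qm}\|\D_qf\|_{L^2}$, the specialization $G\equiv 1$, $m=0$, $(s_1,s_2)=(t,t)$ for the second commutator in \eqref{26}, and the recombination of the several summable sequences into a single $\alpha_q$.

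Where the write-up falls short of a self-contained proof is the commutator estimate itself, which is the actual content of the lemma. The cancellative low--high piece is done correctly (the kernel representation with $\kappa=\mathcal{F}^{-1}(G\psi)$ Schwartz, the bound $\||z|K_q(z)\|_{L^1}\lesssim 2^{q(m-1)}$, and $2^{q'}\approx 2^q$, $\phi^{s_1,s_2}(q')\approx\phi^{s_1,s_2}(q)$ for $|q'-q|\le N_0$), but the high--low paraproduct, the remainder, and the $(\mathrm{Id}-S_{q-1})\u$ leftover are only asserted to follow from H\"older and Bernstein, and it is precisely in those summations that the hypotheses on $s_i,t$ are consumed. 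There is also a minor mislabeling: your pairing of the lower bound with the remainder term is right, but the remainder sum runs over \emph{high} frequencies $j\gtrsim q$ of $f$ and $\u$ (convergence needs $\f{N}{2}+\phi^{s_1,s_2}>0$, i.e. $s_i,t>-\f{N}{2}$), whereas the upper bound $s_i,t\le 1+\f{N}{2}$ is what controls $\|S_{q'-1}\na f\|_{L^\infty}$ in the high--low paraproduct, i.e. a sum over \emph{low} frequencies of $f$; your low/high labels are swapped. Since the omitted estimates are exactly the routine paradifferential bookkeeping carried out in \cite{RD2} (see also the commutator lemmas in \cite{RD}), I would grade this as a correct proof outline matching the cited source, complete except for that deferred, standard part.
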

\begin{Remark} The above lemma looks slightly different from the original one \cite[Lemma 5.1]{RD2}. Indeed, \eqref{25} and \eqref{26} are corresponding to (41) and (42) in \cite{RD2}, respectively, up to a change with the notation $\phi^{s,t}$.
\end{Remark}

\bigskip
\section{Reformulation of the Original System \eqref{e1} and Main Result}
In this section, we first reformulate the original system
\eqref{e1} into a different form and then we state our main result
on the global existence of strong solutions. We simply set
$\theta=\xi=1$ since their sizes do not play any role in our analysis.
For $s\in\R$, we denote
$$\Lambda^sf:=\mathcal{F}^{-1}(|\xi|^s\mathcal{F}(f)).$$
Using the idea in \cite{RD2}, we decompose the velocity field into a compressible part and an incompressible part. Let
$$h=\Lambda^{-1}\Dv\u$$
and
$$\O=\Lambda^{-1}\textrm{curl}\u, \quad\text{with }
(\textrm{curl}\u)_i^j=\partial_{x_j}\u^i-\partial_{x_i}\u^j.$$
Owing to the identity
$\D=\nabla\Dv-\textrm{curl}\textrm{curl}$, we have the decomposition
$$\u=-\Lambda^{-1}\nabla h+\Lambda^{-1}\textrm{curl}\O,$$ which implies that $\u$ can be
recovered from the information of $h$ and $\O$. Denote
$$\mathcal{A}\overset{def}=\mu\D+(\lambda+\mu)\nabla\Dv,$$
and
\begin{equation}
\mathcal{N}=-\u\cdot\nabla\u-\f{\r-1}{\r}\mathcal{A}\u-\f{1}{\r}\Dv\left(\nabla\d\odot\nabla\d-\f12|\nabla\d|^2I\right).\label{N}
\end{equation}
Applying $\Lambda^{-1}\Dv$ and $\Lambda^{-1}\textrm{curl}$ to the
moment equation in \eqref{e1} respectively, we obtain that
\begin{equation}\label{21}
\begin{cases}
\partial_t h-\nu\D h=\Lambda^{-1}\Dv\left(\mathcal{N}-\frac{1}{\rho}\nabla P(\rho)\right),\\
\partial_t\O-\mu\D\O=\Lambda^{-1}\textrm{curl}\mathcal{N},
\end{cases}
\end{equation}
where $\nu=2\mu+\lambda$. Since $\mu>0$ and $2\mu+3\lambda\geq 0$, we have $\nu\geq \frac43\mu>0$. In the second equation of \eqref{21}, we have used the fact that
$$\textrm{curl}\left(\frac{1}{\rho}\nabla P(\rho)\right)= \frac{1}{\rho} \textrm{curl}\nabla P(\rho)+\nabla\left(\frac{1}{\rho}\right)\times \nabla P(\r)=0.$$
The advantage of the above reformulation is to get rid of the
pressure for $\O$, the incompressible part of the velocity, while
we still keep all information of the velocity field $\u$.

For the simplicity of our presentation, our proof focuses on the case:
$P(\r)=\f12\r^2$. The general barotropic case ($P(\r)$ is an increasing convex function of $\r$) can be verified by a slight modification of the argument below.

In this paper, we shall prove the existence of global strong solution for initial datum that is close to an equilibrium state $(1,0,
\hat{\d})$ with a constant vector $\hat{\d}\in S^2$. The result is valid for any positive constant density $\hat{\r}$ and we take $\hat{\r}=1$ just for simplicity. Keeping \eqref{21} in mind, it is convenient to reformulate the original system \eqref{e1} into a new system in terms of $\r$, $h$,
$\O$ and $\d$
\begin{subequations}\label{e2}
\begin{align}
&\partial_t(\r-1)+\u\cdot \nabla (\rho-1)+\Lambda h=-(\r-1)\Dv \u,\\
&\partial_t h-\nu\D h-\Lambda(\r-1)=\Lambda^{-1}\Dv\mathcal{N},\\
&\partial_t\O-\mu\D\O=\Lambda^{-1}\textrm{curl}\mathcal{N},\label{e2c}\\
&\f{\partial\d}{\partial t}+\u\cdot\nabla\d-\D\d=|\nabla\d|^2\d,
\end{align}
\end{subequations}
subject to initial conditions
\begin{equation}
\begin{split}
& \r|_{t=0}=\r_0(x),\quad h|_{t=0}=h_0(x)=\Lambda^{-1}\Dv\u_0(x),\\
&\ \O|_{t=0}=\O(x)=\Lambda^{-1}\textrm{curl}\u_0(x),\quad \d|_{t=0}=\d_0(x).
\end{split}
\end{equation}

Let us now introduce the functional space that appears in the
global existence theorem.
\begin{Definition}
For $T>0$, and $s\in \R$, we denote
\begin{equation*}
\begin{split}
\mathfrak{B}^s_T&=\Big\{(e, \mathbf{f}, \mathbf{g})\in  \left(L^1(0,T;
\tilde{B}^{s+1,s})\cap C([0,T];\tilde{B}^{s-1,s})\right)\\
 &\qquad\qquad\qquad\times\left( L^1(0,T; B^{s+1})\cap
C([0,T];B^{s-1})\right)^3\\
&\qquad\qquad\qquad\times \left(L^1(0,T;
\tilde{B}^{s+1,s+2})\cap C([0,T];\tilde{B}^{s-1,s})\right)^3\Big\}
\end{split}
\end{equation*}
and
\begin{equation*}
\begin{split}
\|(e,\mathbf{f}, \mathbf{g})\|_{\mathfrak{B}^s_T}&=\|e\|_{L^\infty_T(\tilde{B}^{s-1,s})}+\|\mathbf{f}\|_{L^\infty_T(B^{s-1})}+\|\mathbf{g}\|_{L^\infty_T(\tilde{B}^{s-1,s})}\\
&\quad+\|e\|_{L^1_T(\tilde{B}^{s+1,s})}+\|\mathbf{f}\|_{L^1_T(B^{s+1})}+\|\mathbf{g}\|_{L^1_T(\tilde{B}^{s+1,s+2})}.
\end{split}
\end{equation*}
We use the notation $\mathfrak{B}^s$ if $T=+\infty$ by changing
the interval $[0,T]$ into $[0,\infty)$ in the definition above.
\end{Definition}

The main result of this paper is as follows:
\begin{Theorem}\label{MT} Let $\hat{\d}\in\R^3$ be an arbitrary constant unit
vector. There exists two positive constants $\eta$ and $\Gamma$,
such that, if $\r_0-1\in\tilde{B}^{\f{1}{2},\f{3}{2}}$, $\u_0\in
B^{\f{1}{2}}$ and $\d_0-\hat{\d}\in\tilde{B}^{\f{1}{2},\f{3}{2}}$
satisfy
$$\|\r_0-1\|_{\tilde{B}^{\f{1}{2},\f{3}{2}}}+\|\u_0\|_{B^{\f{1}{2}}}+\|\d_0-\hat{\d}\|_{\tilde{B}^{\f{1}{2},\f{3}{2}}}\le
\eta,$$ then system \eqref{e1}--\eqref{ic} has a unique global strong solution
$(\r, \u, \d)$ with $(\r-1, \u, \d-\hat{\d})$ in
$\mathfrak{B}^{\f{3}{2}}$ satisfying
$$\|(\r-1,\u,\d-\hat{\d})\|_{\mathfrak{B}^{\f{3}{2}}}\le
\Gamma\left(\|\r_0-1\|_{\tilde{B}^{\f{1}{2},\f{3}{2}}}+\|\u_0\|_{B^{\f{1}{2}}}+\|\d_0-\hat{\d}\|_{\tilde{B}^{\f{1}{2},\f{3}{2}}}\right).$$
\end{Theorem}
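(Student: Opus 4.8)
The plan is to follow the standard three-step route to global well-posedness in critical spaces---local existence, a priori estimates \emph{uniform in time}, and a continuation argument---applied to the reformulated system \eqref{e2}. Writing $a=\r-1$ and $w=\d-\hat{\d}$, I would introduce the solution functional $E(T)=\|(a,\u,w)\|_{\mathfrak B^{3/2}_T}$ and aim to establish an a priori bound of the shape
$$E(T)\le C_0\,E(0)+C_1\,E(T)^2,$$
valid on any interval $[0,T]$ on which a solution exists, with $C_0,C_1$ independent of $T$. Granting this, a routine continuity argument shows that if $E(0)\le\eta$ with $4C_0C_1\eta<1$, then $E(T)\le 2C_0E(0)$ for every $T$; combined with a local existence theorem whose lifespan does not shrink to zero while $E$ stays bounded, this forces the solution to be global and yields the bound of Theorem \ref{MT}. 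Local existence itself I would obtain by a fixed-point/iteration scheme in a ball of $\mathfrak B^{3/2}_T$, built from the linear estimates below together with the product laws of Proposition \ref{p22}.

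The core of the work is the collection of a priori estimates for the blocks of \eqref{e2}, obtained by applying $\D_q$ to each equation and running frequency-localized energy estimates. Using $\u=-\Lambda^{-1}\na h+\Lambda^{-1}\mathrm{curl}\,\O$, the parabolic blocks---the incompressible velocity $\O$ governed by \eqref{e2c} and the director perturbation $w$ governed by $\partial_t w+\u\cd\na w-\D w=|\na\d|^2\d$---can be treated by pairing $\D_q$ of the equation with $\D_q\O$, respectively $\D_q w$, and absorbing the convective terms through inequality \eqref{25} of Lemma \ref{cl}; a Gronwall step followed by summation over $q$ then delivers the $L^\infty_T$ and $L^1_T$ bounds in the $B$- and $\tilde B$-norms demanded by $\mathfrak B^{3/2}_T$. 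The delicate block is the coupled $(a,h)$ system
$$\partial_t a+\u\cd\na a+\Lambda h=-a\,\Dv\u,\qquad \partial_t h-\nu\D h-\Lambda a=\Lambda^{-1}\Dv\mathcal N,$$
whose linear part is of mixed hyperbolic--parabolic type: the skew coupling $\pm\Lambda$ cancels in the naive energy identity, so parabolic smoothing is felt only by $h$. To recover dissipation for $a$ I would follow Danchin \cite{RD2} and add to $\|\D_q a\|_{L^2}^2+\|\D_q h\|_{L^2}^2$ the cross term $(\Lambda\D_q a\,|\,\D_q h)$; differentiating in time and using the coupling yields, at each $q$, damping $\sim 2^{2q}\|\D_q h\|_{L^2}^2$ for $h$ at all frequencies and damping $\sim\min\{2^{2q},1\}\,\|\D_q a\|_{L^2}^2$ for $a$. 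This dichotomy is exactly what fixes the hybrid indices: at low frequencies $a$ gains two derivatives like a heat variable, while at high frequencies it is merely damped with no regularity gain---the very structure built into $\tilde B^{s+1,s}$. The convective terms carried by the linearized equations are controlled by inequality \eqref{26} of Lemma \ref{cl}, whose symmetric form exploits the cancellation between the two transports so that no derivative is lost.

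It then remains to bound the nonlinearities so as to close $E(T)\le C_0E(0)+C_1E(T)^2$. Since $\tilde B^{1/2,3/2}\hookrightarrow L^\infty$, the smallness of $a$ keeps $\r$ bounded away from $0$, so that $\f{1}{\r}$ and $\f{\r-1}{\r}$ are tame and the terms $\u\cd\na\u$, $\f{\r-1}{\r}\mathcal A\u$ and $-a\,\Dv\u$ occurring in $\mathcal N$ and on the right of the $a$-equation are each estimated as a product of two $\mathfrak B^{3/2}_T$-controlled quantities by Proposition \ref{p22}, hence by $E(T)^2$ (cubic remainders being $O(E(T)^3)$). The one term without a compressible Navier--Stokes analogue is the Ericksen stress $\f{1}{\r}\Dv(\na\d\odot\na\d-\f{1}{2}|\na\d|^2I)$. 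For it I would interpolate between the endpoints of $\mathfrak B^{3/2}_T$ to obtain $\na\d\in L^2_T(\tilde B^{1/2,3/2})$, so that the quadratic stress lies in $L^1_T(\tilde B^{-1/2,3/2})$ by Proposition \ref{p22}; after the first-order multiplier $\Lambda^{-1}\Dv\Dv$ (the non-constant factor $\f{1}{\r}$ being split off and treated perturbatively) this lands in a space embedding into $L^1_T(B^{1/2})$, precisely the forcing regularity the parabolic estimates for $h$ and $\O$ consume.

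I expect the principal obstacle to be the coupled $(a,h)$ damping estimate in the hybrid framework, together with the bookkeeping needed to route each liquid-crystal nonlinearity---the Ericksen stress and the harmonic-map term $|\na\d|^2\d$ in particular---through the correct low- and high-frequency indices, so that the loss-free high-frequency structure of $\tilde B^{s+1,s}$ is respected while the transport commutators are compensated by the extra regularity of $\u$ furnished by Lemma \ref{cl}. With the existence bound in hand, uniqueness I would establish by estimating the difference of two solutions in a space one derivative below the existence space---the standard device for sidestepping the derivative loss in the transport terms---using the same linear estimates at lower regularity followed by Gronwall.
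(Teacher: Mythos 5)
Your proposal is correct and follows essentially the same route as the paper: the same reformulation into $(\r-1,h,\O,\d)$ via $\Lambda^{-1}{\rm div}$ and $\Lambda^{-1}{\rm curl}$, the same frequency-localized energy estimates with the convection Lemma \ref{cl}, the same Danchin-style cross term $(\Lambda\Delta_q(\r-1)\,|\,\Delta_q h)$ yielding the $\tilde{B}^{s+1,s}$ low/high-frequency dichotomy for the density, the same product and interpolation estimates (Proposition \ref{p22}) for the nonlinearities including the Ericksen stress, a small-data bootstrap for global existence, and uniqueness one derivative below in $\mathfrak{B}^{\f{1}{2}}$. The only differences are cosmetic bookkeeping: the paper runs the bootstrap as a contradiction argument with a maximal time $T_0$, and it recovers the full high-frequency smoothing of $h$ in a separate step (treating $\Lambda(\r-1)$ as a source term) rather than directly from the coupled energy functional.
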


The local existence in $\mathfrak{B}^{\f{3}{2}}_T$ can be
established by a standard fixed point argument, for instance, see
\cite{RD2}, and in particular the unique solution satisfies $\d(t,x)\in
S^2$ whenever it exists. The global existence of \eqref{e1}--\eqref{ic} will be
established by extending a local solution with the help of uniform
estimates for the local solution when the initial data is
sufficiently ``small". In the rest of this paper, we focus on the
uniform estimates and uniqueness of the solution to
\eqref{e1}--\eqref{ic}.

\begin{Remark}
Similar results for other dimensions are still true. As the
density is equal to one, we recover a global existence result for incompressible liquid crystal flows, which is
similar to \cite{WC}.
\end{Remark}

\bigskip

\section{Uniform Estimates for Linearized Systems with Convection}

In this section, our goal is to obtain uniform estimates of local
solutions. For this purpose, we consider proper linearized systems with convection that are associated with the reformulated system \eqref{e2}. First, we investigate the following linearized equations for $\O$ and $\d$.
\begin{subequations}\label{le}
\begin{align}
&\partial_t\O+\u\cdot\nabla\O-\mu\D\O
       =\mathcal{L},\label{le1}\\
&\partial_t\d+\u\cdot\nabla\d-\D\d=\mathcal{M},\label{le2}
\end{align}
\end{subequations}
where $\mathcal{L}$, $\mathcal{M}$ and $\u$ are given functions.
For the system \eqref{le}, we have the following estimate:

\begin{Proposition}\label{p1}
Denote
\begin{equation}
V(t):=\int_0^t\|\u(s)\|_{B^{\f{5}{2}}}ds. \label{VV}
\end{equation}
Let $(\O, \d)$ be a solution of \eqref{le} on $[0,T)$. Then the following estimate holds  on $[0,T)$:
\begin{equation*}
\begin{split}
&\|\d(t)\|_{\tilde{B}^{\f{1}{2},\f{3}{2}}}
+\|\O(t)\|_{B^{\f{1}{2}}}+\int_0^t\left(\|\d(s)\|_{\tilde{B}^{\f{5}{2},\f{7}{2}}}+\|\O(s)\|_{B^{\f{5}{2}}}\right)ds\\&\quad\le
Ce^{CV(t)}\Big\{\|\d_0\|_{\tilde{B}^{\f{1}{2},\f{3}{2}}}
+\|\O_0\|_{B^{\f{1}{2}}}+\int_0^te^{-CV(s)}\Big(\|\mathcal{L}\|_{B^{\f{1}{2}}}+\|\mathcal{M}\|_{\tilde{B}^{\f{1}{2},\f{3}{2}}}\Big)ds\Big\},
\end{split}
\end{equation*}
where $C$ is a universal positive constant.
\end{Proposition}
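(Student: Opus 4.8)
The plan is to carry out a frequency-localized energy estimate on each of the two equations in \eqref{le} separately and then combine them through a single Gr\"onwall inequality. Applying the dyadic block $\D_q$ to \eqref{le1} and taking the $L^2$ inner product with $\D_q\O$, then using integration by parts together with Bernstein's inequality in the form $-(\D\D_q\O|\D_q\O)=\|\na\D_q\O\|_{L^2}^2\gtrsim 2^{2q}\|\D_q\O\|_{L^2}^2$, I would obtain for each $q$
\begin{equation*}
\f12\f{d}{dt}\|\D_q\O\|_{L^2}^2+c\mu\, 2^{2q}\|\D_q\O\|_{L^2}^2\le |(\D_q(\u\cd\na\O)|\D_q\O)|+\|\D_q\mathcal{L}\|_{L^2}\|\D_q\O\|_{L^2},
\end{equation*}
and the analogous identity for $\d$ with $\mathcal{M}$ in place of $\mathcal{L}$. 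The convection terms are precisely of the type controlled by \eqref{25} in Lemma \ref{cl} with $G(D)=\mathrm{Id}$, so $m=0$: for $\O$ I take $s_1=s_2=\f12$, and for $\d$ the hybrid indices $s_1=\f12$, $s_2=\f32$, all admissible since $N=3$ forces the range $(-\f32,\f52]$. The crucial point is that the factor appearing there is $\|\u\|_{B^{1+N/2}}=\|\u\|_{B^{5/2}}$, which matches exactly the integrand of $V(t)$ in \eqref{VV}.

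After dividing through by $\|\D_q\O\|_{L^2}$ (a step justified by the standard regularization $\sqrt{\|\D_q\O\|_{L^2}^2+\varepsilon^2}$ followed by $\varepsilon\to0$), the energy identity becomes the scalar inequality
\begin{equation*}
\f{d}{dt}\|\D_q\O\|_{L^2}+c\mu\, 2^{2q}\|\D_q\O\|_{L^2}\le C\alpha_q 2^{-q/2}\|\u\|_{B^{5/2}}\|\O\|_{B^{1/2}}+\|\D_q\mathcal{L}\|_{L^2},
\end{equation*}
with the $\d$-version carrying the factor $2^{-q\phi^{1/2,3/2}(q)}$ in front of $\|\u\|_{B^{5/2}}\|\d\|_{\tilde{B}^{1/2,3/2}}$. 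I would then multiply the $\O$-inequality by $2^{q/2}$ and sum over $q\in\mathbb{Z}$, and multiply the $\d$-inequality by $2^{q/2}$ for $q\le0$ and by $2^{3q/2}$ for $q\ge1$ before summing, using $\sum_q\alpha_q\le1$ together with the identities $\sum_q 2^{5q/2}\|\D_q\O\|_{L^2}=\|\O\|_{B^{5/2}}$ and $\sum_{q\le0}2^{5q/2}\|\D_q\d\|_{L^2}+\sum_{q\ge1}2^{7q/2}\|\D_q\d\|_{L^2}=\|\d\|_{\tilde{B}^{5/2,7/2}}$. The dissipation thus gains exactly two derivatives, turning $\tilde{B}^{1/2,3/2}$ into $\tilde{B}^{5/2,7/2}$ and $B^{1/2}$ into $B^{5/2}$. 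Writing $X(t)=\|\O\|_{B^{1/2}}+\|\d\|_{\tilde{B}^{1/2,3/2}}$, $Y(t)=\|\O\|_{B^{5/2}}+\|\d\|_{\tilde{B}^{5/2,7/2}}$ and $F(t)=\|\mathcal{L}\|_{B^{1/2}}+\|\mathcal{M}\|_{\tilde{B}^{1/2,3/2}}$, the two summed estimates collapse to the single differential inequality
\begin{equation*}
\f{d}{dt}X(t)+cY(t)\le C\|\u(t)\|_{B^{5/2}}X(t)+F(t).
\end{equation*}

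The final step is a Gr\"onwall argument with the integrating factor $e^{-CV(t)}$. Since $V'(t)=\|\u\|_{B^{5/2}}$ by \eqref{VV}, the above gives $\f{d}{dt}\big(e^{-CV(t)}X(t)\big)\le e^{-CV(t)}\big(F(t)-cY(t)\big)$; integrating over $[0,t]$ and multiplying back by $e^{CV(t)}$ yields
\begin{equation*}
X(t)+c\int_0^t e^{C(V(t)-V(s))}Y(s)\,ds\le e^{CV(t)}\Big(X(0)+\int_0^t e^{-CV(s)}F(s)\,ds\Big).
\end{equation*}
Because $V$ is nondecreasing, $e^{C(V(t)-V(s))}\ge1$ for $s\le t$, so the dissipative time integral of $Y$ may be kept on the left unweighted, and absorbing $c$ into the constant produces exactly the asserted bound. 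I expect the only genuinely delicate point to be the correct invocation of Lemma \ref{cl} with the \emph{hybrid} indices $(\f12,\f32)$ for $\d$, and the accompanying bookkeeping of the low- and high-frequency weights $2^{q/2}$ and $2^{3q/2}$ when summing; the parabolic energy estimate and the Gr\"onwall closure are otherwise routine.
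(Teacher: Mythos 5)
Your proposal is correct and follows essentially the same route as the paper's own proof: frequency-localized energy estimates on each dyadic block, Lemma \ref{cl} with hybrid indices to control the convection terms at rate $V'(t)=\|\u\|_{B^{5/2}}$ (your direct weighting of $\D_q\d$ by $2^{3q/2}$ on high frequencies is equivalent to the paper's device of applying $G(D)=\Lambda$ there), the $\sqrt{\,\cdot\,+\dl^2}$ regularization to divide by the block norms, and a Gr\"onwall-type absorption in time. The only cosmetic difference is the order of operations: the paper conjugates by $e^{-KV(t)}$ at the outset and absorbs the convection contribution before summing over $q$ by choosing $K$ large (its inequality \eqref{314}), whereas you sum over $q$ first and then close with a classical integrating-factor Gr\"onwall argument --- the two devices are equivalent and yield the identical estimate.
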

\begin{proof}
To prove this proposition, we first localize \eqref{le} into low and
high frequencies according to the Littlewood--Paley
decomposition. Then each
dyadic block can be estimated by using energy method.

Let $(\O, \d)$ be a solution of \eqref{le} and $K>0$. We introduce the following transformation of variables \cite{RD2}:
$$\tilde{\O}=e^{-KV(t)}\O,
\quad\tilde{\d}=e^{-KV(t)}\d,\quad \tilde{\mathcal{L}}=e^{-KV(t)}\mathcal{L},\quad
\tilde{\mathcal{M}}=e^{-KV(t)}\mathcal{M}.$$
Applying the operator
$\D_q$ to the system \eqref{le}, we deduce that $(\D_q\tilde{\O},
\D_q\tilde{\d})$ satisfies
\begin{equation}\label{32}
\begin{cases}
\partial_t\D_q\tilde{\O}+\D_q(\u\cdot\nabla\tilde{\O})-\mu\D\D_q\tilde{\O}=\D_q\tilde{\mathcal{L}}
-KV'(t)\D_q\tilde{\O},\\
\partial_t \D_q\tilde{\d}+\D_q(\u\cdot\nabla\tilde{\d})-\D
\D_q\tilde{\d}=\D_q\tilde{\mathcal{M}}-KV'(t)\D_q\tilde{\d}.
\end{cases}
\end{equation}
The proof can be carried out in three
steps.

{\bf Step 1: Low Frequencies.}\quad Suppose $q\le 0$ and
define
\begin{equation*}
\begin{split}
f_q^2&=\|\D_q\tilde{\O}\|^2_{L^2}+\|\D_q\tilde{\d}\|^2_{L^2}.
\end{split}
\end{equation*}
Taking the $L^2$-scalar product of the first equation of
\eqref{32} with $\D_q\tilde{\O}$, and the second equation with
$\D_q\tilde{\d}$, we obtain the following two identities:
\begin{equation}\label{33}
\begin{split}
\f{1}{2}\f{d}{dt}\|\D_q\tilde{\O}\|_{L^2}+(\D_q(\u\cdot\nabla\tilde{\O})|\D_q\tilde{\O})+\mu\|\Lambda\D_q\tilde{\O}\|^2_{L^2}=(\D_q\tilde{\mathcal{L}}|\D_q\tilde{\O})-KV'\|\D_q\tilde{\O}\|_{L^2}^2,
\end{split}
\end{equation}
and
\begin{equation}\label{34}
\begin{split}
\f{1}{2}\f{d}{dt}\|\D_q\tilde{\d}\|_{L^2}^2+(\D_q(\u\cdot\nabla\tilde{\d})|\D_q\tilde{\d})+\|\Lambda\D_q\tilde{\d}\|_{L^2}^2
=(\D_q\tilde{\mathcal{M}}|\D_q\tilde{\d})-KV'\|\D_q\tilde{\d}\|_{L^2}^2.
\end{split}
\end{equation}
Adding \eqref{33} and \eqref{34} together, we obtain
\begin{equation}\label{35}
\f{1}{2}\f{d}{dt}f^2_q+\mu\|\Lambda\D_q\tilde{\O}\|^2_{L^2}+\|\Lambda\D_q\tilde{\d}\|_{L^2}^2=\mathcal{X}-KV'(t)f_q^2,
\end{equation}
where
\begin{equation*}
\mathcal{X}=-(\D_q(\u\cdot\nabla\tilde{\O})|\D_q\tilde{\O})-(\D_q(\u\cdot\nabla\tilde{\d})|\D_q\tilde{\d})+(\D_q\tilde{\mathcal{L}}|\D_q\tilde{\O})+(\D_q\tilde{\mathcal{M}}|\D_q\tilde{\d}).
\end{equation*}
The term $\mathcal{X}$ can be estimated by using Lemma \ref{cl} (taking $s_1=s_2=\frac12$ in \eqref{25}) such that
\begin{equation}\label{36}
\begin{split}
|\mathcal{X}|&\le
Cf_q\Big(\|\D_q\tilde{\mathcal{L}}\|_{L^2}+\|\D_q\tilde{\mathcal{M}}\|_{L^2}+2^{-\f{q}{2}}\alpha_q
V'(\|\tilde{\d}\|_{B^{\f{1}{2}}}+\|\tilde{\O}\|_{B^{\f{1}{2}}})\Big)\\
&\le
Cf_q\Big(\|\D_q\tilde{\mathcal{L}}\|_{L^2}+\|\D_q\tilde{\mathcal{M}}\|_{L^2}+2^{-\f{q}{2}}\alpha_q
V'(\|\tilde{\d}\|_{\tilde{B}^{\f{1}{2},\f{3}{2}}}+\|\tilde{\O}\|_{B^{\f{1}{2}}})\Big).
\end{split}
\end{equation}
 In the last inequality, we used the embedding
$$\tilde{B}^{\f{1}{2},\f{3}{2}}\hookrightarrow B^{\f{1}{2}}.$$
Hence, combining \eqref{35} and \eqref{36} together, we have for $q\leq 0$
\begin{equation}\label{37}
\begin{split}
&\f{1}{2}\f{d}{dt}f_q^2+\mu\|\Lambda\D_q\tilde{\O}\|^2_{L^2}+\|\Lambda\D_q\tilde{\d}\|_{L^2}^2\\
&\le
Cf_q\Big(\|\D_q\tilde{\mathcal{L}}\|_{L^2}+\|\D_q\tilde{\mathcal{M}}\|_{L^2}+2^{-\f{q}{2}}\alpha_q
V'(\|\tilde{\d}\|_{\tilde{B}^{\f{1}{2},\f{3}{2}}}+\|\tilde{\O}\|_{B^{\f{1}{2}}})\Big)-KV'(t)f_q^2.
\end{split}
\end{equation}

{\bf Step 2: High Frequencies.}\quad In this step, we assume
$q>0$ and set
$$f_q^2=\|\D_q\tilde{\O}\|^2_{L^2}+\|\Lambda\D_q\tilde{\d}\|^2_{L^2}$$
We apply the operator $\Lambda$ to the second equation of
\eqref{32}, multiply by $\Lambda\D_q\tilde{\d}$ and integrate over
$\R^3$ to yield
\begin{equation}\label{38}
\begin{split}
&\f{1}{2}\f{d}{dt}\|\Lambda\D_q\tilde{\d}\|_{L^2}^2+(\Lambda\D_q(\u\cdot\nabla\tilde{\d})|\Lambda\D_q{\tilde{\d}})
+\|\Lambda^2\D_q\tilde{\d}\|_{L^2}^2\\
& \quad =(\Lambda\D_q\tilde{\mathcal{M}}|\Lambda\D_q\tilde{\d})-KV'\|\Lambda\D_q\tilde{\d}\|^2_{L^2}.
\end{split}
\end{equation}
Adding \eqref{33} and \eqref{38} together, we have
\begin{equation}\label{39}
\f{1}{2}\f{d}{dt}f_q^2+\mu\|\Lambda\D_q\tilde{\O}\|^2_{L^2}+\|\Lambda^2\D_q\tilde{\d}\|_{L^2}^2=\mathcal{Y}-KV'(t)f_q^2
\end{equation}
with
\begin{equation*}
\mathcal{Y}=-(\D_q(\u\cdot\nabla\tilde{\O})|\D_q\tilde{\O})-(\Lambda\D_q(\u\cdot\nabla\tilde{\d})|\Lambda\D_q{\tilde{\d}})+(\D_q\tilde{\mathcal{L}}|\D_q\tilde{\O})+(\Lambda\D_q\tilde{\mathcal{M}}|\Lambda\D_q\tilde{\d}).
\end{equation*}
For $\mathcal{Y}$, using Lemma \ref{cl}, one has
\begin{equation}\label{310}
\begin{split}
|\mathcal{Y}| &\le
 \|\D_q\tilde{\O}\|_{L^2}\|\D_q\tilde{\mathcal{L}}\|_{L^2}+\|\Lambda\D_q\tilde{\d}\|_{L^2}\|\Lambda\D_q\tilde{\mathcal{M}}\|_{L^2}
 +C2^{-\f{q}{2}}\alpha_q
V'\|\tilde{\O}\|_{B^{\f{1}{2}}}\|\D_q\tilde{\O}\|_{L^2}\\
&\quad + C2^{-q(\phi^{\frac12, \frac32}(q)-1)}\alpha_q
V'  \|\tilde{\d}\|_{\tilde{B}^{\f{1}{2},\f{3}{2}}}\|\Lambda\D_q\tilde{\d}\|_{L^2}
\\
&\leq Cf_q\Big(\|\D_q\tilde{\mathcal{L}}\|_{L^2}+\|\Lambda\D_q\tilde{\mathcal{M}}\|_{L^2}+2^{-\f{q}{2}}\alpha_q
V'(\|\tilde{\d}\|_{\tilde{B}^{\f{1}{2},\f{3}{2}}}+\|\tilde{\O}\|_{B^{\f{1}{2}}})\Big)
\end{split}
\end{equation}
 Hence, combining \eqref{39} and \eqref{310}
together, we have for $q>0$
\begin{equation}\label{311}
\begin{split}
&\f{1}{2}\f{d}{dt}f_q^2+\mu\|\Lambda\D_q\tilde{\O}\|^2_{L^2}+\|\Lambda^2\D_q\tilde{\d}\|_{L^2}^2\\
&\ \le
Cf_q\Big(\|\D_q\tilde{\mathcal{L}}\|_{L^2}+\|\Lambda\D_q\tilde{\mathcal{M}}\|_{L^2}+2^{-\f{q}{2}}\alpha_q
V'(\|\tilde{\d}\|_{\tilde{B}^{\f{1}{2},\f{3}{2}}}+\|\tilde{\O}\|_{B^{\f{1}{2}}})\Big)-KV'(t)f_q^2.
\end{split}
\end{equation}

{\bf Step 3: Damping Effect.}\quad We are now going to show
that inequalities \eqref{37} and \eqref{311} entail a decay for
$\O$ and $\d$. Denote $g_q=2^{\f{q}{2}}f_q$ for $q\in \mathbb{Z}$. The well-known Bernstein's inequality implies that
\begin{equation}
 C_0g_q\le
2^{\f{q}{2}}\|\D_q\tilde{\O}\|_{L^2}+2^{q\phi^{\f{1}{2},\f{3}{2}}(q)}\|\D_q\tilde{\d}\|_{L^2}\le\f{1}{C_0}g_q\label{BBg}
 \end{equation}
for some universal positive constant $C_0$.
Therefore, we infer from \eqref{37}, \eqref{311} that there exists a universal positive constant $\kappa$ such that
\begin{equation}\label{312}
\begin{split}
\f{1}{2}\f{d}{dt}g_q^2+\kappa \min\{\mu,1\}2^{2q}g_q^2&\le C\alpha_q
g_q\Big(\|\tilde{\mathcal{L}}\|_{B^{\f{1}{2}}}+\|\tilde{\mathcal{M}}\|_{\tilde{B}^{\f{1}{2},\f{3}{2}}}+
V'(\|\tilde{\d}\|_{\tilde{B}^{\f{1}{2},\f{3}{2}}}+\|\tilde{\O}\|_{B^{\f{1}{2}}})\Big)\\
&\quad-KV'(t)g_q^2.
\end{split}
\end{equation}
Let $\dl>0$ be a small parameter and denote
$\chi_q^2=g_q^2+\dl^2.$ From \eqref{312}, dividing by
$\chi_q$, we obtain
\begin{equation*}
\begin{split}
\f{d}{dt}\chi_q+\kappa \min\{\mu,1\}2^{2q} \chi_q &\le
C\alpha_q\Big(\|\tilde{\mathcal{L}}\|_{B^{\f{1}{2}}}+\|\tilde{\mathcal{M}}\|_{\tilde{B}^{\f{1}{2},\f{3}{2}}}+
V'(\|\tilde{\d}\|_{\tilde{B}^{\f{1}{2},\f{3}{2}}}+\|\tilde{\O}\|_{B^{\f{1}{2}}})\Big)\\&\quad-KV'\chi_q+\dl
KV'+\dl \kappa 2^{2q}.
\end{split}
\end{equation*}
Integrating the above inequality over $[0,t]$ and having $\dl$
tend to $0$, we obtain,
\begin{equation}\label{313}
\begin{split}
&g_q(t)+\kappa \min\{\mu,1\}2^{2q}\int_0^t g_q(s)ds\\
&\ \le
g_q(0)+C\int_0^t\alpha_q(s)\Big(\|\tilde{\mathcal{L}}\|_{B^{\f{1}{2}}}+\|\tilde{\mathcal{M}}\|_{\tilde{B}^{\f{1}{2},\f{3}{2}}}\Big)ds\\
&\quad+\int_0^tV'(s)\Big(C\alpha_q(s)
(\|\tilde{\d}\|_{\tilde{B}^{\f{1}{2},\f{3}{2}}}+\|\tilde{\O}\|_{B^{\f{1}{2}}})-Kg_q(s)\Big)ds.
\end{split}
\end{equation}
\eqref{BBg} implies that
\begin{equation*}
\begin{split}
&C\alpha_q(s)
(\|\tilde{\d}\|_{\tilde{B}^{\f{1}{2},\f{3}{2}}}+\|\tilde{\O}\|_{B^{\f{1}{2}}})-Kg_q(s)\\
&\ \ \le
C\alpha_q(s)\|\tilde{\d}\|_{\tilde{B}^{\f{1}{2},\f{3}{2}}}-C_0K2^{q\phi^{\f{1}{2},\f{3}{2}}(q)}\|\D_q\tilde{\d}\|_{L^2}+C\alpha_q(s)\|\tilde{\O}\|_{B^{\f{1}{2}}}-C_0K
2^{\f{q}{2}}\|\D_q\tilde{\O}\|_{L^2}.
\end{split}
\end{equation*}
If we choose $K$ such that $KC_0>C$, then we infer from the fact $\sum_{q\in\mathbb{Z}}\alpha_q\le 1$ that
\begin{equation}\label{314}
\sum_{q\in\mathbb{Z}}\Big\{C\alpha_q(s)
\left(\|\tilde{\d}\|_{\tilde{B}^{\f{1}{2},\f{3}{2}}}+\|\tilde{\O}\|_{B^{\f{1}{2}}}\right)-Kg_q(s)\Big\}\le
0.
\end{equation}
With the inequality \eqref{314} in hand, after summation over
$\mathbb{Z}$, we deduce from \eqref{313} that
\begin{equation}\label{315}
\begin{split}
&\|\tilde{\O}(t)\|_{B^{\f{1}{2}}}+\|\tilde{\d}(t)\|_{\tilde{B}^{\f{1}{2},\f{3}{2}}}+\kappa\min\{\mu,1\}\int_0^t\left(\|\tilde{\O}(s)\|_{\tilde{B}^{\f{5}{2}}}+\|\tilde{\d}(s)\|_{\tilde{B}^{\f{5}{2},\f{7}{2}}}
\right)ds
\\&\quad\le C\Big\{\|\O_0\|_{B^{\f{1}{2}}}+\|\d_0\|_{\tilde{B}^{\f{1}{2},\f{3}{2}}}
+\int_0^t\Big(\|\tilde{\mathcal{L}}(s)\|_{\tilde{B}^{\f{1}{2}}}+\|\tilde{\mathcal{M}}(s)\|_{\tilde{B}^{\f{1}{2},\f{3}{2}}}\Big)ds\Big\}.
\end{split}
\end{equation}
This finishes the proof.
\end{proof}

Next, we turn to consider the linearized system for $\vr$ and $h$:
\begin{subequations}\label{2e}
\begin{align}
&\partial_t \vr+\u\cdot\nabla \vr+\Lambda h
       =\mathcal{J},\label{le1b}\\
&\partial_t h+\u\cdot\nabla h-\nu\D
h-\Lambda\vr=\mathcal{K},\label{le2b}
\end{align}
\end{subequations}
where $\mathcal{J}$, $\mathcal{K}$ and $\u$ are given functions.
Note that this system is different from \eqref{le} since there are
stronger couplings between $\vr$ and $h$. For the system \eqref{2e},
we have the following estimates (see a similar result in \cite[Proposition 2.3]{RD2}):

\begin{Proposition}\label{p2} Let $(\vr, h)$ be a solution of
\eqref{2e} on $[0,T)$. Then the following estimate holds  on
$[0,T)$:
\begin{equation*}
\begin{split}
&\|\vr(t)\|_{\tilde{B}^{\f{1}{2},\f{3}{2}}}
+\|h(t)\|_{B^{\f{1}{2}}}+\int_0^t\left(\|\vr(s)\|_{\tilde{B}^{\f{5}{2},\f{3}{2}}}+\|h(s)\|_{B^{\f{5}{2}}}\right)ds\\&\quad\le
Ce^{CV(t)}\Big\{\|\vr_0\|_{\tilde{B}^{\f{1}{2},\f{3}{2}}}
+\|h_0\|_{B^{\f{1}{2}}}+\int_0^te^{-CV(s)}\Big(\|\mathcal{K}\|_{B^{\f{1}{2}}}+\|\mathcal{J}\|_{\tilde{B}^{\f{1}{2},\f{3}{2}}}\Big)ds\Big\},
\end{split}
\end{equation*}
where $C$ is a universal positive constant and $V$ is given by \eqref{VV}.
\end{Proposition}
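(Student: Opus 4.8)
The proof runs parallel to that of Proposition \ref{p1}, the decisive new feature being the genuine coupling between $\vr$ and $h$ through the terms $\Lambda h$ and $-\Lambda\vr$ in \eqref{2e}. As a first step I would copy the reduction of Proposition \ref{p1}: set $\tilde\vr=e^{-KV(t)}\vr$, $\tilde h=e^{-KV(t)}h$, $\tilde{\mathcal{J}}=e^{-KV(t)}\mathcal{J}$, $\tilde{\mathcal{K}}=e^{-KV(t)}\mathcal{K}$ for a large constant $K$ to be fixed later, apply $\D_q$ to \eqref{2e}, and obtain the localized system for $(\D_q\tilde\vr,\D_q\tilde h)$ carrying an extra dissipative term $-KV'$ on the right-hand side. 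The convective contributions $\D_q(\u\cd\na\tilde\vr)$ and $\D_q(\u\cd\na\tilde h)$ will be controlled throughout by Lemma \ref{cl}: the diagonal products by \eqref{25}, and --- crucially --- the products appearing in the cross functional below by the mixed estimate \eqref{26} with $G=\Lambda$ (degree $m=1$).

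The point is that a naive energy estimate loses all information on $\vr$. Indeed, taking the $L^2$ product of the $\tilde\vr$-equation with $\D_q\tilde\vr$ and of the $\tilde h$-equation with $\D_q\tilde h$ and adding, the two coupling contributions $(\Lambda\D_q\tilde h|\D_q\tilde\vr)$ and $-(\Lambda\D_q\tilde\vr|\D_q\tilde h)$ cancel exactly by self-adjointness of $\Lambda$, so the resulting identity exhibits only the parabolic damping $\nu\|\Lambda\D_q\tilde h\|_{L^2}^2$ of $h$ and no dissipation whatsoever for $\vr$. To unveil the hidden damping of the density I would add to the energy the \emph{cross functional} $(\Lambda\D_q\tilde\vr|\D_q\tilde h)$: differentiating it in time and invoking both equations of \eqref{2e} produces the positive term $\|\Lambda\D_q\tilde\vr\|_{L^2}^2$, that is, a true dissipation for $\vr$, at the price of the indefinite terms $-\|\Lambda\D_q\tilde h\|_{L^2}^2$ and $\nu(\Lambda\D_q\tilde\vr|\D\D_q\tilde h)$.

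Accordingly, for low frequencies $q\le 0$ I would work with
$$f_q^2=\|\D_q\tilde\vr\|_{L^2}^2+\|\D_q\tilde h\|_{L^2}^2+2\beta_q(\Lambda\D_q\tilde\vr|\D_q\tilde h),\qquad \beta_q=\varepsilon\min\{1,2^{-2q}\},$$
which for $\varepsilon$ small is equivalent to $\|\D_q\tilde\vr\|_{L^2}^2+\|\D_q\tilde h\|_{L^2}^2$ because $\beta_q2^{q}\le\varepsilon$ on each shell; here $\beta_q\simeq\varepsilon$ yields a density damping of order $2^{2q}\|\D_q\tilde\vr\|_{L^2}^2$, matching the parabolic gain of two derivatives shared by $\vr$ and $h$. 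For high frequencies $q>0$ I would instead measure the density one derivative higher, using $\Lambda\D_q\tilde\vr$ as the density unknown in both the energy and the cross functional, exactly as $\d$ was treated in Proposition \ref{p1}; now $\beta_q\simeq\varepsilon 2^{-2q}$ degrades the damping to the fixed order $\|\D_q\tilde\vr\|_{L^2}^2$, which is precisely the frequency-independent decay of the density mode and accounts for the fact that $\vr$ gains no regularity at high frequency --- hence the appearance of the hybrid spaces $\tilde{B}^{\f12,\f32}$ for the data and $\tilde{B}^{\f52,\f32}$ for the dissipation, in which the high-frequency index is unchanged. The main obstacle is the absorption of the indefinite coupling term $\beta_q\nu(\Lambda\D_q\tilde\vr|\D\D_q\tilde h)$: I would dominate it by Young's inequality against the two available dampings $\beta_q\|\Lambda\D_q\tilde\vr\|_{L^2}^2$ and $\nu\|\Lambda\D_q\tilde h\|_{L^2}^2$, the choice $\beta_q=\varepsilon\min\{1,2^{-2q}\}$ being dictated exactly by the requirement that, after Young's inequality, the residual powers of $2$ remain bounded, so that a single small fixed $\varepsilon$ closes the estimate uniformly in $q\in\mathbb{Z}$.

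Once these dyadic differential inequalities are in hand, the conclusion follows the last step of Proposition \ref{p1} verbatim. Writing $g_q=2^{\f{q}{2}}f_q$ and using Bernstein's inequality to compare $g_q$ with the block norms weighted by $2^{\f{q}{2}}$ for $h$ and by $2^{q\phi^{\f12,\f32}(q)}$ for $\vr$, I would regularize by $\chi_q=\sqrt{g_q^2+\delta^2}$, integrate in time, let $\delta\to 0$, and finally choose $K$ so large that the gain $-KV'g_q$ dominates the convective remainder after summation over $q\in\mathbb{Z}$. Undoing the exponential change of unknowns reinstates the factor $e^{CV(t)}$ and yields the stated inequality; everything apart from the cross-term dissipation and the frequency-dependent weighting described above is identical to the decoupled case of Proposition \ref{p1}.
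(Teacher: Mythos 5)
Your overall architecture---the weighted unknowns $e^{-KV(t)}$, dyadic localization, a cross functional $(\Lambda\D_q\tilde{\vr}|\D_q\tilde{h})$ with a frequency-dependent coefficient, and a low/high frequency split with the density measured one derivative higher at high frequencies---is exactly the paper's (the paper splits at $q_0=\log_2(3/\nu)$ rather than at $q=0$ and uses coefficients $\f{\tau\nu}{8}$, resp. $\f{1}{\nu}$, but that is immaterial). There are, however, two problems. The first is a sign error that, as written, destroys the mechanism: the cross functional must be \emph{subtracted} from the energy. Differentiating $(\Lambda\D_q\tilde{\vr}|\D_q\tilde{h})$ indeed produces $+\|\Lambda\D_q\tilde{\vr}\|_{L^2}^2-\|\Lambda\D_q\tilde{h}\|_{L^2}^2+\nu(\Lambda\D_q\tilde{\vr}|\D\D_q\tilde{h})$, but if this functional enters $f_q^2$ with the coefficient $+2\beta_q$, then in the identity $\f12\f{d}{dt}f_q^2=\cdots$ the term $+\beta_q\|\Lambda\D_q\tilde{\vr}\|_{L^2}^2$ sits on the right-hand side as a \emph{growth} term; the resulting quadratic form is indefinite (positive in the $\vr$-direction) and no decay for $\vr$ can be extracted. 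With
\begin{equation*}
f_q^2=\|\D_q\tilde{\vr}\|^2_{L^2}+\|\D_q\tilde{h}\|^2_{L^2}-2\beta_q(\Lambda\D_q\tilde{\vr}|\D_q\tilde{h}),
\end{equation*}
which is the paper's choice up to constants, the same computation gives
$\f12\f{d}{dt}f_q^2+(\nu-\beta_q)\|\Lambda\D_q\tilde{h}\|^2_{L^2}+\beta_q\|\Lambda\D_q\tilde{\vr}\|^2_{L^2}+\beta_q\nu(\Lambda\D_q\tilde{\vr}|\D\D_q\tilde{h})=\cdots$, and your Young-inequality absorption with $\beta_q=\varepsilon\min\{1,2^{-2q}\}$ then closes. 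This part is fixable.

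The genuine gap is that your plan never proves the term $\int_0^t\|h(s)\|_{B^{\f52}}ds$ in the stated estimate. At high frequencies your energy couples $\|\Lambda\D_q\tilde{\vr}\|_{L^2}$ and $\|\D_q\tilde{h}\|_{L^2}$, and, as you yourself observe, the damping there is of fixed order one, because the density mode cannot decay faster. A single differential inequality for $f_q$ can only retain the \emph{worst} rate among its components, so integrating it yields for the high frequencies of $h$ only $\int_0^t 2^{\f{q}{2}}\|\D_q\tilde{h}\|_{L^2}\,ds\lesssim\cdots$, i.e.\ $h\in L^1_t(\tilde{B}^{\f52,\f12})$---no gain of two derivatives, no $2^{\f{5q}{2}}$ weights. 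Hence the conclusion does not follow ``verbatim'' from the last step of Proposition \ref{p1}: there both unknowns were purely parabolic and the damping was $2^{2q}$ at \emph{all} frequencies, which is precisely what fails here. The paper closes this with a fourth step you are missing (``smoothing effect of $h$''): once Step 3 controls $\int_0^t\|\tilde{\vr}\|_{\tilde{B}^{\f52,\f32}}ds$, one returns to the $h$-equation viewed as a heat equation,
\begin{equation*}
\partial_t \D_q\tilde{h}-\nu\D \D_q\tilde{h}=-\D_q(\u\cdot\nabla \tilde{h})+\Lambda\D_q \tilde{\vr}
+\D_q\tilde{\mathcal{K}}-KV'(t)\D_q\tilde{h},
\end{equation*}
and runs a direct parabolic estimate on the blocks $q\ge q_0$, where the source $\Lambda\D_q\tilde{\vr}$ is now admissible since $\sum_{q\ge q_0}\int_0^t 2^{\f{3q}{2}}\|\D_q\tilde{\vr}\|_{L^2}ds$ is exactly what the damping estimate for $\vr$ provides. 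Only this bootstrap yields $\nu\int_0^t\sum_{q\ge q_0}2^{\f{5q}{2}}\|\D_q\tilde{h}\|_{L^2}ds\lesssim (1+V(t))(\cdots)$ and hence the full $\int_0^t\|h\|_{B^{\f52}}ds$ in the proposition; without it your argument proves a strictly weaker statement.
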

\begin{proof}
The proof is due to  the argument in \cite{RD2}, and similar to
that in Proposition \ref{p1}. For the completeness, below we present a proof that is
slightly different from \cite{RD2}. To this end, again
we first localize \eqref{2e} in low and high frequencies according
to the Littlewood--Paley decomposition.

Let $(\vr, h)$ be a solution of \eqref{2e} and $K>0$. Define
$$\tilde{\vr}=e^{-KV(t)}\vr,\quad\tilde{h}=e^{-KV(t)}h,
\quad \tilde{\mathcal{J}}=e^{-KV(t)}\mathcal{J},\quad
\tilde{\mathcal{K}}=e^{-KV(t)}\mathcal{K}.$$ Applying the operator
$\D_q$ to the system \eqref{2e}, we deduce that $(\D_q\tilde{h},
\D_q\tilde{\r})$ satisfies
\begin{equation}\label{132}
\begin{cases}
\partial_t \D_q\tilde{\vr}+\D_q(\u\cdot\nabla\tilde{\vr})+\Lambda
\D_q\tilde{h}=\D_q\tilde{\mathcal{J}}-KV'(t)\D_q\tilde{\vr},\\
\partial_t\D_q\tilde{h}+\D_q(\u\cdot\nabla\tilde{h})-\nu\D\D_q\tilde{h}-\Lambda\D_q\tilde{\vr}=\D_q\tilde{\mathcal{K}}
-KV'(t)\D_q\tilde{h}.
\end{cases}
\end{equation}
Set
\begin{equation}
q_0=\log_2\left(\f{3}{\nu}\right).\label{q0}
\end{equation}

{\bf Step 1: Low Frequencies.}\quad Suppose $q\le q_0$. As a result, $2^q\leq 2^{q_0}\leq \f{3}{\nu}$. Taking the $L^2$-scalar
product of the first equation of \eqref{132} with $\D_q\tilde{\vr}$ and the second equation of \eqref{132} with $\D_q\tilde{h}$, we obtain the
following two identities:
\begin{equation}\label{134}
\begin{split}
\f{1}{2}\f{d}{dt}\|\D_q\tilde{\vr}\|_{L^2}^2+(\D_q(\u\cdot\nabla\tilde{\vr})|\D_q\tilde{\vr})+(\Lambda\D_q\tilde{h}|\D_q\tilde{\vr})
=(\D_q\tilde{\mathcal{J}}|\D_q\tilde{\vr})-KV'\|\D_q\tilde{\vr}\|_{L^2}^2,
\end{split}
\end{equation}
and
\begin{equation}\label{133}
\begin{split}
&\f{1}{2}\f{d}{dt}\|\D_q\tilde{h}\|^2_{L^2}+(\D_q(\u\cdot\nabla\tilde{h})|\D_q\tilde{h})+\nu\|\Lambda\D_q\tilde{h}\|^2_{L^2}-(\Lambda\D_q\tilde{\vr}|\D_q\tilde{h})\\
&\quad=(\D_q\tilde{\mathcal{K}}|\D_q\tilde{h})-KV'\|\D_q\tilde{h}\|_{L^2}^2.
\end{split}
\end{equation}
Next, we derive an identity involving
$(\Lambda\D_q\widetilde{\varrho}|\D_q\tilde{h})$. For this
purpose, we apply $\Lambda$ to the first equation in \eqref{132}
and take the $L^2$ scalar product with $\D_q\tilde{h}$, then take
the scalar product of the second equation in \eqref{132} with
$\Lambda\D_q\tilde{\vr}$. Summing up both equalities, we get
\begin{equation}\label{113}
\begin{split}
&\f{d}{dt}(\Lambda\D_q\widetilde{\varrho}|\D_q\tilde{h})+(\D_q(\u\cdot\nabla\tilde{h})|\Lambda\D_q\tilde{\varrho})+(\Lambda\D_q(\u\cdot\nabla\tilde{\varrho})|\D_q\tilde{h})\\
&\qquad-\|\Lambda\D_q\widetilde{\varrho}\|_{L^2}^2+\|\Lambda\D_q\tilde{h}\|_{L^2}^2+\nu(\Lambda^2\D_q\tilde{h}|\Lambda\D_q\tilde{\varrho})\\
&\ \
=(\Lambda\D_q\tilde{\mathcal{J}}|\D_q\tilde{h})+(\D_q\tilde{\mathcal{K}}|\Lambda\D_q\tilde{\varrho})-2KV'(\Lambda\D_q\tilde{\varrho}|\D_q\tilde{h})
\end{split}
\end{equation}
Let $\tau$ be a small constant such that $0<\tau\leq\frac{8}{9}$. We define
\begin{equation*}
\begin{split}
f_q^2&=\|\D_q\tilde{\vr}\|^2_{L^2}+\|\D_q\tilde{h}\|^2_{L^2}-\f{\tau\nu}{4}(\Lambda\D_q\tilde{\vr}|\D_q\tilde{h}).
\end{split}
\end{equation*}
The Bernstein's inequality yields
\begin{equation*}
 \begin{split}
  \left|\f{\tau\nu}{4}(\Lambda\D_q\tilde{\vr}|\D_q\tilde{h})\right|&\le\f{\tau\nu}{4}\|\Lambda\D_q\tilde{\vr}\|_{L^2}\|\D_q\tilde{h}\|_{L^2}\le \f{\tau\nu}{4}2^q\|\D_q\tilde{\vr}\|_{L^2}\|\D_q\tilde{h}\|_{L^2}\\
&\le 2^{q_0}\f{\tau\nu}{8}(\|\D_q\tilde{\vr}\|^2_{L^2}+\|\D_q\tilde{h}\|_{L^2}^2)\\
&\le \f{1}{3}(\|\D_q\tilde{\vr}\|^2_{L^2}+\|\D_q\tilde{h}\|_{L^2}^2)
 \end{split}
\end{equation*}
which implies that
\begin{equation}
 f_q^2\approx \|\D_q\tilde{h}\|^2_{L^2}+\|\D_q\tilde{\vr}\|^2_{L^2}.\label{fqq1}
\end{equation}
Here we note that the universal constant due to the Bernstein's inequality is harmless in our estimate and thus assumed to be one for simplicity.

Multiplying \eqref{113} by $-\frac{\tau\nu}{8}$ and adding it with \eqref{134}, \eqref{133}, we obtain that
\begin{equation}\label{135}
\f{1}{2}\f{d}{dt}f^2_q+\f{\nu}{8}\Big[(8-\tau)\|\Lambda\D_q\tilde{h}\|^2_{L^2}+\tau\|\Lambda\D_q\tilde{\vr}\|_{L^2}^2-\nu\tau(\Lambda^2\D_q\tilde{h}|\Lambda\D_q\tilde{\vr})\Big]=\mathcal{X}_1-KV'(t)f_q^2,
\end{equation}
with
\begin{equation*}
\begin{split}
\mathcal{X}_1&=-(\D_q(\u\cdot\nabla\tilde{h})|\D_q\tilde{h})-(\D_q(\u\cdot\nabla\tilde{\vr})|\D_q\tilde{\vr})+\f{\tau\nu}{8}(\D_q(\u\cdot\nabla\tilde{h})|\Lambda\D_q\tilde{\vr})\\
&\quad+\f{\tau\nu}{8}(\Lambda\D_q(\u\cdot\nabla\tilde{\vr})|\D_q\tilde{h})+(\D_q\tilde{\mathcal{K}}|\D_q\tilde{h})+(\D_q\tilde{\mathcal{J}}|\D_q\tilde{\vr})\\
&\quad-\frac{\tau\nu}{8}\left[(\Lambda\D_q\tilde{\mathcal{J}}|\D_q\tilde{h})+(\D_q\tilde{\mathcal{K}}|\Lambda\D_q\tilde{\vr})\right].
\end{split}
\end{equation*}
Since $\phi^{\frac12, \frac32}(q)\leq \frac32$, the assumption $q\leq q_0$ implies that $2^{-q(\phi^{\frac12, \frac32}(q)-\frac32)}\leq 2^{q_0}$. As a consequence, for $\mathcal{X}_1$, using Lemma \ref{cl} and \eqref{fqq1}, we have
\begin{equation}\label{136}
\begin{split}
|\mathcal{X}_1|&\leq C\alpha_q 2^{-\frac{q}{2}}V'(\|\tilde{h}\|_{B^\frac12}\|\D_q \tilde{h}\|_{L^2}+\|\tilde{\vr}\|_{B^\frac12}\|\D_q \tilde{\vr}\|_{L^2})
\\
& \ \ +\frac{C\tau\nu}{8} \alpha_q V'(2^{-\frac{q}{2}}\|\Lambda \Delta_q \tilde{\vr}\|_{L^2}\|\tilde{h}\|_{B^\frac12}
 +2^{-q(\phi^{\frac12, \frac32}(q)-1)}\|\tilde{\vr}\|_{\tilde{B}^{\f{1}{2},\f{3}{2}}}\|\Delta_q \tilde{h}\|_{L^2})
\\
&\ \ +\|\D_q\tilde{\mathcal{K}}\|_{L^2}\|\D_q \tilde{h}\|_{L^2}+\|\D_q\tilde{\mathcal{J}}\|_{L^2}\|\D_q \tilde{\vr}\|_{L^2}
\\
&\ \ + \frac{C\tau\nu}{8} 2^{q_0}(\|\D_q\tilde{\mathcal{K}}\|_{L^2}\|\D_q \tilde{\vr}\|_{L^2}+\|\D_q\tilde{\mathcal{J}}\|_{L^2}\|\D_q \tilde{h}\|_{L^2})\\
&\le
Cf_q\Big(\|\D_q\tilde{\mathcal{K}}\|_{L^2}+\|\D_q\tilde{\mathcal{J}}\|_{L^2}+2^{-\f{q}{2}}\alpha_q
V'(\|\tilde{\vr}\|_{\tilde{B}^{\f{1}{2},\f{3}{2}}}+\|\tilde{h}\|_{B^{\f{1}{2}}})\Big),
\end{split}
\end{equation}
where $C$ is a universal constant that may depend on $\nu$.
Besides, due to our choice of $\tau$, we can conclude that
\begin{equation}
|\nu\tau(\Lambda^2\D_q\tilde{h}|\Lambda\D_q\tilde{\vr})|\leq \nu\tau 2^{q_0}\|\Lambda\D_q\tilde{h}\|_{L^2}\|\Lambda\D_q\tilde{\vr}\|_{L^2}\leq 4\|\Lambda\D_q\tilde{h}\|^2_{L^2}+\frac{\tau}{2}\|\Lambda\D_q\tilde{\vr}\|_{L^2}^2.
\end{equation}
Then it easily follows from the Bernstein's inequality that
$$2^{2q}\Big(\|\D_q\tilde{h}\|^2_{L^2}+\|\D_q\tilde{\vr}\|_{L^2}^2\Big)\thickapprox(8-\tau)\|\Lambda\D_q\tilde{h}\|^2_{L^2}+\tau\|\Lambda\D_q\tilde{\vr}\|_{L^2}^2-\nu\tau(\Lambda^2\D_q\tilde{h}|\Lambda\D_q\tilde{\vr}).$$
Hence, combining \eqref{135} and \eqref{136}
together, we can find a positive universal constant $\kappa$ such that
\begin{equation}\label{137}
\begin{split}
&\f{1}{2}\f{d}{dt}f_q^2+\kappa
2^{2q}\Big(\|\D_q\tilde{h}\|^2_{L^2}+\|\D_q\tilde{\vr}\|_{L^2}^2\Big)\\
&\ \ \le
Cf_q\Big(\|\D_q\tilde{\mathcal{K}}\|_{L^2}+\|\D_q\tilde{\mathcal{J}}\|_{L^2}+2^{-\f{q}{2}}\alpha_q
V'(\|\tilde{\vr}\|_{\tilde{B}^{\f{1}{2},\f{3}{2}}}+\|\tilde{h}\|_{B^{\f{1}{2}}})\Big)-KV'(t)f_q^2.
\end{split}
\end{equation}

{\bf Step 2: High Frequencies.}\quad Suppose
$q\ge q_0$. We apply the operator
$\Lambda$ to the first equation of \eqref{132}, multiply by
$\Lambda\D_q\tilde{\vr}$ and integrate over $\R^3$ to yield
\begin{equation}\label{138}
\begin{split}
&\f{1}{2}\f{d}{dt}\|\Lambda\D_q\tilde{\vr}\|_{L^2}^2+(\Lambda\D_q(\u\cdot\nabla\tilde{\vr})|\Lambda\D_q{\tilde{\vr}})
+(\Lambda^2\D_q\tilde{h}|\Lambda\D_q\tilde{\vr})\\
&\ \ =(\Lambda\D_q\tilde{\mathcal{J}}|\Lambda\D_q\tilde{\vr})-KV'\|\Lambda\D_q\tilde{\vr}\|^2_{L^2}.
\end{split}
\end{equation}
 Set
$$f_q^2=\|\Lambda\D_q\tilde{\vr}\|^2_{L^2}+\f{3}{\nu^2}\|\D_q\tilde{h}\|^2_{L^2}-\f{2}{\nu}(\Lambda\D_q\tilde{\vr}\,|\D_q\tilde{h}).$$
It easily follow from the Cauchy--Schwarz inequality that
$$f_q^2\thickapprox\|\D_q\tilde{h}\|^2_{L^2}+\|\Lambda\D_q\tilde{\vr}\|^2_{L^2}.$$
A linear combination of \eqref{133}, \eqref{113} and \eqref{138}  yields that
\begin{equation}\label{139}
\f{1}{2}\f{d}{dt}f_q^2+\f{2}{\nu}\|\Lambda\D_q\tilde{h}\|^2_{L^2}+\f{1}{\nu}\|\Lambda\D_q\tilde{\vr}\|_{L^2}^2-\f{3}{\nu^2}(\Lambda\D_q\tilde{\vr} \, |\D_q\tilde{h})=\mathcal{Y}_1-KV'(t)f_q^2
\end{equation}
with
\begin{equation*}
\begin{split}
\mathcal{Y}_1&=-\f{3}{\nu^2}(\D_q(\u\cdot\nabla\tilde{h})|\D_q\tilde{h})-(\Lambda\D_q(\u\cdot\nabla\tilde{\vr})|\Lambda\D_q{\tilde{\vr}})+\f{3}{\nu^2}(\D_q\tilde{\mathcal{K}}|\D_q\tilde{h})\\
&\quad
+(\Lambda\D_q\tilde{\mathcal{J}}|\Lambda\D_q\tilde{\vr})+\f{1}{\nu}(\D_q(\u\cdot\nabla\tilde{h})|\Lambda\D_q\tilde{\vr})+\f{1}{\nu}(\Lambda\D_q(\u\cdot\nabla\tilde{\vr})|\D_q\tilde{h})\\
&\quad
-\f{1}{\nu}(\Lambda\D_q\tilde{\mathcal{J}}|\D_q\tilde{h})-\f{1}{\nu}(\D_q\tilde{\mathcal{K}}|\Lambda\D_q\tilde{\vr}).
\end{split}
\end{equation*}
 For $\mathcal{Y}_1$,  the assumption $q\geq q_0$ implies that $2^{-q(\phi^{\frac12, \frac32}(q)-\frac32)}\leq 1$, then using Lemma \ref{cl}, we can see that
\begin{equation}\label{1310}
\begin{split}
|\mathcal{Y}_1| &\leq \frac{3C}{\nu^2}\alpha_q2^{-\frac{q}{2}}V'\|\tilde{h}\|_{B^\frac12}\|\D_q \tilde{h}\|_{L^2}+ C\alpha_q2^{-q(\phi^{\frac12, \frac32}(q)-1)}V'\|\tilde{\vr}\|_{\tilde{B}^{\frac12, \frac32}}\|\Lambda \D_q\tilde{\vr}\|_{L^2}
\\
&\ \ +\frac{3}{\nu^2}\|\D_q\tilde{\mathcal{K}}\|_{L^2}\|\D_q \tilde{h}\|_{L^2}+ \|\Lambda\D_q\tilde{\mathcal{J}}\|_{L^2}\|\Lambda\D_q\tilde{\vr}\|_{L^2}\\
&\ \ +\frac{1}{\nu}( \|\Lambda\D_q\tilde{\mathcal{J}}\|_{L^2}\|\D_q\tilde{h}\|_{L^2}+\|\D_q\tilde{\mathcal{K}}\|_{L^2}\|\Lambda\D_q\tilde{\vr}\|_{L^2})
\\
&\ \ +\frac{C\alpha_q}{\nu}V'(2^{-\frac{q}{2}}\|\Lambda \D_q \tilde{\vr}\|_{L^2}\|\tilde{h}\|_{B^\frac12}+ 2^{-q(\phi^{\frac12, \frac32}(q)-1)}\|\tilde{\vr}\|_{\tilde{B}^{\frac12, \frac32}}\|\D_q\tilde{h}\|_{L^2})
\\
&\le
Cf_q\Big(\|\D_q\tilde{\mathcal{K}}\|_{L^2}+\|\Lambda\D_q\tilde{\mathcal{J}}\|_{L^2}+2^{-\f{q}{2}}\alpha_q
V'(\|\tilde{\vr}\|_{\tilde{B}^{\f{1}{2},\f{3}{2}}}+\|\tilde{h}\|_{B^{\f{1}{2}}})\Big).
\end{split}
\end{equation}
Since $q\geq q_0=\log_2\left(\f{3}{\nu}\right)$, the Bernstein's inequality implies
that
$$ \frac{3}{\nu^2}|(\Lambda\D_q\tilde{\vr}|\D_q\tilde{h})|\leq 2^{-q_0}\frac{3}{\nu^2}\|\Lambda\D_q\tilde{\vr}\|_{L^2}\|\Lambda\D_q\tilde{h}\|_{L^2}\leq \frac1{2\nu}(\|\Lambda\D_q\tilde{\vr}\|_{L^2}^2+\|\Lambda\D_q\tilde{h}\|_{L^2}^2).
$$
As a result,
\begin{equation}\label{1391}
\begin{split}
\|\Lambda\D_q\tilde{\vr}\|_{L^2}^2+\|\D_q\tilde{h}\|_{L^2}^2&\leq \|\Lambda\D_q\tilde{\vr}\|_{L^2}^2+2^{-2q_0}\|\Lambda\D_q\tilde{h}\|_{L^2}^2\\
& \leq \|\Lambda\D_q\tilde{\vr}\|_{L^2}^2+\frac{\nu^2}{9}\|\Lambda\D_q\tilde{h}\|_{L^2}^2\\
& \leq C\left(\f{2}{\nu}\|\Lambda\D_q\tilde{h}\|^2_{L^2}+\f{1}{\nu}\|\Lambda\D_q\tilde{\vr}\|_{L^2}^2-\frac{3}{\nu^2}(\Lambda\D_q\tilde{\vr}|\D_q\tilde{h})\right).\\
\end{split}
\end{equation}
 Hence, combining \eqref{139}, \eqref{1310} and \eqref{1391}
together, there exists a positive constant $\kappa$ such that
\begin{equation}\label{1311}
\begin{split}
&\f{1}{2}\f{d}{dt}f_q^2+\kappa\Big(\|\D_q\tilde{h}\|^2_{L^2}+\|\Lambda\D_q\tilde{\vr}\|_{L^2}^2\Big)\\
&\ \ \le
Cf_q\Big(\|\D_q\tilde{\mathcal{K}}\|_{L^2}+\|\Lambda\D_q\tilde{\mathcal{J}}\|_{L^2}+2^{-\f{q}{2}}\alpha_q
V'(\|\tilde{\vr}\|_{\tilde{B}^{\f{1}{2},\f{3}{2}}}+\|\tilde{h}\|_{B^{\f{1}{2}}})\Big)-KV'(t)f_q^2.
\end{split}
\end{equation}

{\bf Step 3: Damping Effect.}\quad We now show
that inequalities \eqref{137} and \eqref{1311} entail a decay for
$h$ and $\vr$. Denote $g_q=2^{\f{q}{2}}f_q$ for $q\in \mathbb{Z}$. It follows from \eqref{137}, \eqref{1311}, and
Bernstein's inequality that
\begin{equation}\label{1312}
\begin{split}
\f{1}{2}\f{d}{dt}(g_q^2)+\kappa 2^{q\phi^{2,0}(q-q_0)}g_q^2&\le
C\alpha_q
g_q\Big(\|\tilde{\mathcal{K}}\|_{B^{\f{1}{2}}}+\|\tilde{\mathcal{J}}\|_{\tilde{B}^{\f{1}{2},\f{3}{2}}}+
V'(\|\tilde{\vr}\|_{\tilde{B}^{\f{1}{2},\f{3}{2}}}+\|\tilde{h}\|_{B^{\f{1}{2}}})\Big)\\
&\quad-KV'(t)g_q^2.
\end{split}
\end{equation}
Let $\dl>0$ be a small parameter (which will tend to 0) and denote
$\chi_q^2=g_q^2+\dl^2.$ From \eqref{1312}, dividing by
$\chi_q$, we obtain
\begin{equation*}
\begin{split}
\f{d}{dt}\chi_q+\kappa 2^{q\phi^{2,0}(q-q_0)} \chi_q &\le
C\alpha_q\Big(\|\tilde{\mathcal{K}}\|_{B^{\f{1}{2}}}+\|\tilde{\mathcal{J}}\|_{\tilde{B}^{\f{1}{2},\f{3}{2}}}+
V'(\|\tilde{\vr}\|_{\tilde{B}^{\f{1}{2},\f{3}{2}}}+\|\tilde{h}\|_{B^{\f{1}{2}}})\Big)\\&\quad-KV'\chi_q+\dl
KV'+\dl \kappa 2^q.
\end{split}
\end{equation*}
Integrating the above inequality over $[0,t]$ and having $\dl$
tend to $0$, we obtain,
\begin{equation}\label{1313}
\begin{split}
g_q(t)+\kappa 2^{q\phi^{2,0}(q-q_0)}\int_0^t g_q(s)ds&\le
g_q(0)+C\int_0^t\alpha_q(s)\Big(\|\tilde{\mathcal{K}}\|_{B^{\f{1}{2}}}+\|\tilde{\mathcal{J}}\|_{\tilde{B}^{\f{1}{2},\f{3}{2}}}\Big)ds\\
&\quad+\int_0^tV'\Big(C\alpha_q(s)
(\|\tilde{\vr}\|_{\tilde{B}^{\f{1}{2},\f{3}{2}}}+\|\tilde{h}\|_{B^{\f{1}{2}}})-Kg_q(s)\Big)ds.
\end{split}
\end{equation}
Bernstein's inequality implies
$$C_0g_q\le
2^{\f{q}{2}}\|\D_q\tilde{h}\|_{L^2}+2^{q\phi^{\f{1}{2},\f{3}{2}}(q-q_0)}\|\D_q\tilde{\r}\|_{L^2}\le\f{1}{C_0}g_q$$
for some universal positive constant $C_0$, and hence
\begin{equation*}
\begin{split}
C\alpha_q(s)
(\|\tilde{\vr}\|_{\tilde{B}^{\f{1}{2},\f{3}{2}}}+\|\tilde{h}\|_{B^{\f{1}{2}}})-Kg_q(s)&\le
C\alpha_q(s)\|\tilde{h}\|_{B^{\f{1}{2}}}-C_0K
2^{\f{q}{2}}\|\D_q\tilde{h}\|_{L^2}\\
&\quad+C\alpha_q(s)\|\tilde{\vr}\|_{\tilde{B}^{\f{1}{2},\f{3}{2}}}-C_0K2^{q\phi^{\f{1}{2},\f{3}{2}}(q-q_0)}\|\D_q\tilde{\vr}\|_{L^2}.
\end{split}
\end{equation*}
If we choose $KC_0>C$, we have
\begin{equation}\label{1314}
\sum_{q\in\mathbb{Z}}\Big\{C\alpha_q(s)
\left(\|\tilde{\vr}\|_{\tilde{B}^{\f{1}{2},\f{3}{2}}}+\|\tilde{h}\|_{B^{\f{1}{2}}}\right)-Kg_q(s)\Big\}\le
0.
\end{equation}
With the inequality \eqref{1314} in hand, after summation over
$\mathbb{Z}$, we deduce from \eqref{1313} that
\begin{equation}\label{1315}
\begin{split}
&\|\tilde{h}(t)\|_{B^{\f{1}{2}}}+\|\tilde{\vr}(t)\|_{\tilde{B}^{\f{1}{2},\f{3}{2}}}+\kappa\int_0^t\left(\|\tilde{h}(\tau)\|_{\tilde{B}^{\f52,\f{1}{2}}}+\|\tilde{\vr}(\tau)\|_{\tilde{B}^{\f{5}{2},\f{3}{2}}}
\right)d\tau
\\&\quad\le C\Big\{\|h_0\|_{B^{\f{1}{2}}}+\|\vr_0\|_{\tilde{B}^{\f{1}{2},\f{3}{2}}}
+\int_0^t\Big(\|\tilde{\mathcal{K}}(s)\|_{\tilde{B}^{\f{1}{2}}}+\|\tilde{\mathcal{J}}(s)\|_{\tilde{B}^{\f{1}{2},\f{3}{2}}}\Big)ds\Big\}.
\end{split}
\end{equation}

{\bf Step 4: Smoothing Effect of $h$.}\quad Based on the damping effect for $\vr$, we can now further get the smoothing effect of $h$ by considering \eqref{2e} with $\Lambda\vr$ being seen as a source term.
Indeed, thanks to \eqref{1315}, it suffices to state the proof for
high frequencies only. We therefore assume that $q\ge q_0$.

Define
$I_q=2^{\f{q}{2}}\|\D_q\tilde{h}\|_{L^2}$.
Then, from the energy estimates for the system
\begin{equation*}
\partial_t \D_q\tilde{h}-\nu\D \D_q\tilde{h}=-\D_q(\u\cdot\nabla \tilde{h})+\Lambda\D_q \tilde{\vr}
+\D_q\tilde{\mathcal{K}}-KV'(t)\D_q\tilde{h},
\end{equation*}
we have
\begin{equation*}
\begin{split}
\f{1}{2}\f{d}{dt}I_q^2+\nu 2^{2q}I_q^2&\le
I_q\Big(2^{\f{3q}{2}}\|\D_q\tilde{\vr}\|_{L^2}+2^{\f{q}{2}}\|\D_q\tilde{\mathcal{K}}\|_{L^2}\Big)
+CI_qV'(t)\alpha_q\|\tilde{h}\|_{B^{\f{1}{2}}},
\end{split}
\end{equation*}
for a universal positive constant $\kappa$. Using
$J_q^2=I_q^2+\dl^2$, integrating over $[0,t]$ and then taking the
limit as $\dl\rightarrow 0$, we deduce
\begin{equation}\label{1322}
\begin{split}
I_q(t)+\nu 2^{2q}\int_0^tI_q(s)ds &\le I_q(0)+\int_0^t2^{\f{q}{2}}\|\D_q\tilde{\mathcal{K}}(s)\|_{L^2}ds+\int_0^t2^{\f{3q}{2}}\|\D_q\tilde{\vr}(s)\|_{L^2}ds\\&\qquad
+C\int_0^tV'(s)\alpha_q(s)\|\tilde{h}(s)\|_{B^{\f{1}{2}}}ds.
\end{split}
\end{equation}
We therefore get
\begin{equation*}
\begin{split}
&\sum_{q\ge
q_0}2^{\f{q}{2}}\|\D_q\tilde{h}(t)\|_{L^2}+\nu\int_0^t\sum_{q\ge
q_0}2^{\f{5q}{2}}\|\D_q\tilde{h}(s)\|_{L^2}ds\\
&\quad\le\|h_0\|_{B^{\f{1}{2}}}+\int_0^t\|\tilde{\mathcal{K}}(s)\|_{B^{\f{1}{2}}}ds+\int_0^t\sum_{q\ge
q_0}2^{\f{3q}{2}}\|\D_q\tilde{\vr}(s)\|_{L^2}ds+CV(t)\sup_{s\in
[0,t]}\|\tilde{h}\|_{B^{\f{1}{2}}}.
\end{split}
\end{equation*}
Using \eqref{1315}, we eventually conclude that
\begin{equation*}
\begin{split}
\nu\int_0^t\sum_{q\ge
q_0}2^{\f{5q}{2}}\|\D_q\tilde{h}(s)\|_{L^2}ds&\le
C(1+V(t))\Big(\|\vr_0\|_{\tilde{B}^{\f{1}{2},\f{3}{2}}}+\|h_0\|_{B^{\f{1}{2}}}\\&\qquad
+\int_0^t\Big(\|\tilde{\mathcal{K}}(s)\|_{B^{\f{1}{2}}}
+\|\tilde{\mathcal{J}}(s)\|_{\tilde{B}^{\f{1}{2},\f{3}{2}}}\Big)ds\Big).
\end{split}
\end{equation*}
Combining the last inequality with \eqref{1315}, we finish the
proof of Proposition \ref{p2}.
\end{proof}

\bigskip

\section{Global Existence for Initial Data Near Equilibrium}

In this section, we are going to show that if the initial data
$$\|\r_0-1\|_{\tilde{B}^{\f{1}{2},\f{3}{2}}}+\|\u_0\|_{B^{\f{1}{2}}}+\|\d_0-\hat{\d}\|_{\tilde{B}^{\f{1}{2},\f{3}{2}}}\le
\eta$$ for some sufficiently small $\eta$, there exists a positive
constant $\Gamma$ such that
$$\|(\r-1, \u,\d-\hat{\d})\|_{\mathfrak{B}^{\f{3}{2}}}\le\Gamma\eta.$$ This uniform estimate will enable us to extend the local solution $(\r, \u, \d)$ obtained within an iterative scheme as in \cite{RD2} to be a global one. To this
end, we use a contradiction argument. Define
$$T_0=\sup\left\{T\in[0,\infty): \|(\r-1,\u,\d-\hat{\d})\|_{\mathfrak{B}_T^{\f{3}{2}}}\le
\Gamma\eta\right\},$$ with $\Gamma$ to be determined later.
Suppose that $T_0<\infty$.
We apply the linear estimates in Proposition \ref{p1} and Proposition \ref{p2} to the solution of reformulated system $\eqref{e2}$ such that for all $t\in [0,T_0]$, the following estimate holds:
\begin{equation}\label{316}
\begin{split}
& \|\d(t)-\hat{\d}\|_{L^\infty_{T_0}(\tilde{B}^{\f{1}{2},\f{3}{2}})}
+\|\O(t)\|_{L^\infty_{T_0}(B^{\f{1}{2}})}+\int_0^{T_0}\left(\|\d(s)-\hat{\d}\|_{\tilde{B}^{\f{5}{2},\f{7}{2}}}+\|\O(s)\|_{B^{\f{5}{2}}}\right)ds \\
&\ \ \le
Ce^{CV}\left(\|\d_0-\hat{\d}\|_{\tilde{B}^{\f{1}{2},\f{3}{2}}}+\|\O_0\|_{B^{\f{1}{2}}}
+\|\mathcal{L}\|_{L^1_{T_0}(B^{\f{1}{2}})}+\|\mathcal{M}\|_{L^1_{T_0}(\tilde{B}^{\f{1}{2},\f{3}{2}})}\right),
\end{split}
\end{equation}
and
\begin{equation}\label{1316}
\begin{split}
&\|\r(t)-1\|_{L^\infty_{T_0}(\tilde{B}^{\f{1}{2},\f{3}{2}})}
+\|h(t)\|_{L^\infty_{T_0}(B^{\f{1}{2}})}+\int_0^{T_0}\left(\|\r(s)-1\|_{\tilde{B}^{\f{5}{2},\f{3}{2}}}+\|h(s)\|_{B^{\f{5}{2}}}\right)ds\\&\quad\le
Ce^{CV}\left(\|\r_0-1\|_{\tilde{B}^{\f{1}{2},\f{3}{2}}}+\|h_0\|_{B^{\f{1}{2}}}
+\|\mathcal{K}\|_{L^1_{T_0}(B^{\f{1}{2}})}+\|\mathcal{J}\|_{L^1_{T_0}(\tilde{B}^{\f{1}{2},\f{3}{2}})}\right),
\end{split}
\end{equation}
where
\begin{equation*}
V=\int_0^{T_0} \|\u\|_{B^{\f{5}{2}}}ds.
\end{equation*}
We note that $\nabla\d=\nabla(\d-\hat{\d})$ because $\hat\d\in S^2$ is a constant vector. As a result, the function $\mathcal{N}$ (see \eqref{N}) can be rewritten as
$$\mathcal{N}=\widehat{\mathcal{N}}:=-\u\cdot\nabla\u-\f{\r-1}{\r}\mathcal{A}\u-\f{1}{\r}\Dv\left(\nabla(\d-\hat{\d})\odot\nabla(\d-\hat{\d})-\f12|\nabla(\d-\hat{\d})|^2I\right).$$
Therefore, the functions $\mathcal{L}$ and $\mathcal{M}$ in \eqref{316} are given by
\begin{equation*}
\begin{split}
\mathcal{L}&=\Lambda^{-1}\textrm{curl}\widehat{\mathcal{N}}+\u\cdot\nabla\O,
\\
\mathcal{M}&=|\nabla(\d-\hat{\d})|^2(\d-\hat{\d})+|\nabla(\d-\hat{\d})|^2\hat{\d},
\end{split}
\end{equation*}
while for the functions $\mathcal{K}$ and $\mathcal{J}$ in \eqref{1316}, we have
\begin{equation*}
\begin{split}
\mathcal{K}&=\Lambda^{-1}\Dv\widehat{\mathcal{N}}+\u\cdot\nabla h,
\\
\mathcal{J}&=-(\r-1)\Dv\u.
\end{split}
\end{equation*}

In what follows, we derive estimates for the nonlinear terms  $\mathcal{L}$, $\mathcal{M}$, $\mathcal{K}$ and $\mathcal{J}$. Indeed,
for the term $\u\cdot\nabla\O$, we infer from Proposition \ref{p22} that
\begin{equation}\label{317}
\begin{split}
\|\u\cdot\nabla\O\|_{L^1_{T_0}(B^{\f{1}{2}})}&\le
C\int_0^{T_0}\|\u\|_{B^{\f{1}{2}}}\|\nabla\O\|_{B^{\f{3}{2}}}dt\\
&\le C\|\u\|_{L^\infty_{T_0}(B^{\f{1}{2}})}\|\O\|_{L^1_{T_0}(B^{\f{5}{2}})}\le
C\Gamma^2\eta^2.
\end{split}
\end{equation}
Similarly, we have
\begin{equation}\label{318}
\begin{split}
\|\Lambda^{-1}\textrm{curl}\u\cdot\nabla\u\|_{L^1_{T_0}(B^{\f{1}{2}})}&\le
C\|\u\cdot\nabla\u\|_{L^1_{T_0}(B^{\f{1}{2}})}\le
\int_0^{T_0}\|\u\|_{B^{\f{1}{2}}}\|\nabla\u\|_{B^{\f{3}{2}}}dt\\
&\le C\|\u\|_{L^\infty_{T_0}(B^{\f{1}{2}})}\|\u\|_{L^1_{T_0}(B^{\f{5}{2}})}\le
C\Gamma^2\eta^2.
\end{split}
\end{equation}
By the embedding $B^\frac32\hookrightarrow L^\infty$, we consider $F(\vr)=\frac{\vr}{1+\vr}$ with $\vr=\r-1$, then by \cite[Lemma 1.6]{RD2}, we have
$$\|F(\vr)\|_{B^\frac32}\leq C_0(\|\vr\|_{L^\infty})\|\vr\|_{B^\frac32},$$
where $\|\vr\|_{L^\infty}\leq 1+\|\r\|_{L^\infty}\leq 1+C\|\rho\|_{B^\frac32}$. Then we can apply Proposition \ref{p22} to obtain that
\begin{equation}\label{319}
\begin{split}
&\left\|\Lambda^{-1}\textrm{curl}\left(\f{1}{\r}\Dv\left(\nabla(\d-\hat{\d})\odot\nabla(\d-\hat{\d})-\f12|\nabla(\d-\hat{\d})|^2I\right)\right)\right\|_{L^1_{T_0}(B^{\f{1}{2}})}\\
&\quad\le C\left(1+\left\|\f{\r-1}{\r}\right\|_{L^\infty_{T_0}(B^{\f32})}\right)\left\|\Dv\left(\nabla(\d-\hat{\d})\odot\nabla(\d-\hat{\d})-\f12|\nabla(\d-\hat{\d})|^2I\right)\right\|_{L^1_{T_0}(B^{\f{1}{2}})}\\
&\quad\le C\Big(1+\|\r-1\|_{L^\infty_{T_0}(B^{\f32})}\Big)\|\nabla(\d-\hat{\d})\odot\nabla(\d-\hat{\d})-\f12|\nabla(\d-\hat{\d})|^2I\|_{L^1_{T_0}(B^{\f{3}{2}})}\\
&\quad\le C\Big(1+\|\r-1\|_{L^\infty_{T_0}(\tilde{B}^{\f12,\f32})}\Big)\int_0^{T_0}\|\nabla(\d-\hat{\d})\|^2_{B^{\f{3}{2}}}dt\\
&\quad\le
C(1+\Gamma\eta)\int_0^{T_0}\|\nabla(\d-\hat{\d})\|^2_{\tilde{B}^{\f{1}{2},\f{3}{2}}}dt\\
&\quad\le
C(1+\Gamma\eta)\|\d-\hat{\d}\|^2_{L^2_{T_0}(\tilde{B}^{\f{3}{2},\f{5}{2}})}\\
&\quad\le
C(1+\Gamma\eta)\|\d-\hat{\d}\|_{L^\infty_{T_0}(\tilde{B}^{\f{1}{2},\f{3}{2}})}\|\d-\hat{\d}\|_{L^1_{T_0}(\tilde{B}^{\f{5}{2},\f{7}{2}})}\le
C(1+\Gamma\eta)\Gamma^2\eta^2.
\end{split}
\end{equation}
In the last line of above inequality we have used the interpolation
$$\|f\|^2_{L^2_{T_0}(\tilde{B}^{\f{3}{2},\f{5}{2}})}\le
C\|f\|_{L^\infty_{T_0}(\tilde{B}^{\f{1}{2},\f{3}{2}})}\|f\|_{L^1_{T_0}(\tilde{B}^{\f{5}{2},\f{7}{2}})}.$$
The term $\f{\r-1}{\r}\mathcal{A}\u$ can be dealt with in a similar way such that
\begin{equation}\label{1319}
 \begin{split}
\left\|\Lambda^{-1}\textrm{curl}\left(\f{\r-1}{\r}\mathcal{A}\u\right)\right\|_{L^1_{T_0}(B^{\f{1}{2}})}&\le\left\|\f{\r-1}{\r}\mathcal{A}\u\right\|_{L^1_{T_0}(B^{\f{1}{2}})}\\
&\le \left\|\f{\r-1}{\r}\right\|_{L^\infty_{T_0}(B^{\f32})}\|\mathcal{A}\u\|_{L^1_{T_0}(B^{\f{1}{2}})}\\
&\le C\|\r-1\|_{L^\infty_{T_0}(B^{\f32})}\|\mathcal{A}\u\|_{L^1_{T_0}(B^{\f{1}{2}})}\\
&\le C\|\r-1\|_{L^\infty_{T_0}(\tilde{B}^{\f12,\f32})}\|\u\|_{L^1_{T_0}(B^{\f{5}{2}})}\le C\Gamma^2\eta^2.
 \end{split}
\end{equation}
Combining \eqref{317}--\eqref{1319} together, we
obtain
\begin{equation}\label{320}
\|\mathcal{L}\|_{L^1_{T_0}(B^{\f{1}{2}})}\le C\Gamma^2\eta^2+C\Gamma^3\eta^3.
\end{equation}
Similarly, we can derive the estimate for $\mathcal{K}$:
\begin{equation}\label{1320}
\|\mathcal{K}\|_{L^1_{T_0}(B^{\f{1}{2}})}\le C\Gamma^2\eta^2+C\Gamma^3\eta^3.
\end{equation}

Next, we turn to the estimate for $\mathcal{M}$. For
$|\nabla(\d-\hat{\d})|^2(\d-\hat{\d})$, indeed, we have, as in
\eqref{319}
\begin{equation}\label{321}
\begin{split}
&\||\nabla(\d-\hat{\d})|^2(\d-\hat{\d})\|_{L^1_{T_0}(\tilde{B}^{\f{1}{2},\f{3}{2}})}\\
&\quad\le
C\||\nabla(\d-\hat{\d})|^2\|_{L^1_{T_0}(B^{\f{3}{2}})}\|(\d-\hat{\d})\|_{L^\infty_{T_0}(\tilde{B}^{\f{1}{2},\f{3}{2}})}\\
&\quad\le C\Gamma\eta\int_0^{T_0}\|\nabla(\d-\hat{\d})\|^2_{B^{\f{3}{2}}}dt\le
C\Gamma\eta\int_0^{T_0}\|\nabla(\d-\hat{\d})\|^2_{\tilde{B}^{\f{1}{2},\f{3}{2}}}dt\\
&\quad\le
C\Gamma\eta\|(\d-\hat{\d})\|^2_{L^2_{T_0}(\tilde{B}^{\f{3}{2},\f{5}{2}})}\\
&\quad\le
C\Gamma\eta\|(\d-\hat{\d})\|_{L^\infty_{T_0}(\tilde{B}^{\f{1}{2},\f{3}{2}})}\|(\d-\hat{\d})\|_{L^1_{T_0}(\tilde{B}^{\f{5}{2},\f{7}{2}})}\le
C\Gamma^3\eta^3.
\end{split}
\end{equation}
For $|\nabla(\d-\hat{\d})|^2\hat{\d}$, we have, by the
definition of Besov's spaces
\begin{equation}\label{3211}
\begin{split}
\||\nabla(\d-\hat{\d})|^2\hat{\d}\|_{L^1_{T_0}(\tilde{B}^{\f{1}{2},\f{3}{2}})}&\le
C\||\nabla(\d-\hat{\d})|^2\|_{L^1_{T_0}(\tilde{B}^{\f{1}{2},\f{3}{2}})}\\
&\le C\int_0^{T_0}\|\nabla(\d-\hat{\d})\|_{L^\infty}\|\nabla(\d-\hat{\d})\|_{\tilde{B}^{\f{1}{2},\f{3}{2}}}dt\\
&\le
C\int_0^{T_0}\|\nabla(\d-\hat{\d})\|^2_{\tilde{B}^{\f{1}{2},\f{3}{2}}}dt\\
&\le C\|\d-\hat{\d}\|^2_{L^2_{T_0}(\tilde{B}^{\f{3}{2},\f{5}{2}})}\\
&\le
C\|\d-\hat{\d}\|_{L^\infty_{T_0}(\tilde{B}^{\f{1}{2},\f{3}{2}})}\|\d-\hat{\d}\|_{L^1_{T_0}(\tilde{B}^{\f{5}{2},\f{7}{2}})}\le
C\Gamma^2\eta^2.
\end{split}
\end{equation}
Finally, for $\mathcal{J}$, we have
\begin{equation}\label{32111}
\begin{split}
\|(\r-1)\Dv\u\|_{L^1_{T_0}(\tilde{B}^{\f{1}{2},\f{3}{2}})}&\le
C\|\r-1\|_{L^\infty_{T_0}(\tilde{B}^{\f12,\f32})}\|\Dv\u\|_{L^1_{T_0}(B^{\f32})}\\
&\le C\Gamma^2\eta^2.
\end{split}
\end{equation}
Substituting \eqref{320}--\eqref{32111} back to \eqref{316} and
\eqref{1316}, we obtain
\begin{equation}\label{322}
\begin{split}
\|(\r-1,\u,\d-\hat{\d}))\|_{\mathfrak{B}_{T_0}^{\f{3}{2}}}\leq C_1e^{C_1\Gamma\eta}\left(\eta+\Gamma^2\eta^2+\Gamma^3\eta^3\right).
\end{split}
\end{equation}
We choose $\Gamma=4C_1$, and then $\eta>0$ satisfying
\begin{equation}
e^{C_1\Gamma \eta}< 2, \quad \Gamma^2\eta\leq \frac12,\quad
\Gamma^3\eta^2\leq \frac12.
\end{equation}
Hence, it follows from \eqref{322} and the above choices of $\Gamma$ and $\eta$ that
$$\|(\r-1,\u,\d-\hat{\d})\|_{\mathfrak{B}_{T_0}^{\f{3}{2}}}< \Gamma\eta,$$
which is a contradiction with the definition of $T_0$. As a consequence, we can conclude that
$T_0=\infty$. The proof of global existence is thus proved.

\bigskip

\section{Uniqueness}

In this section, we will address the uniqueness of the solution in
$\mathfrak{B}^{\f{3}{2}}$. For this purpose, suppose that $(\r_i,\u_i,
\d_i)_{i=1,2}$ in $\mathfrak{B}^{\f{3}{2}}$ solve \eqref{e1} with
the same initial data.

Define $$\dl\r=\r_2-\r_1, \ \ \dl\u=\u_2-\u_1,\ \  \dl h=\Lambda^{-1}\Dv\dl\u,\ \
\dl\O=\Lambda^{-1}\textrm{curl}\dl\u,\ \ \dl \d=\d_2-\d_1.$$
then $(\dl\r, \dl\u, \dl \d)\in \mathfrak{B}_T^{\f{3}{2}}$ for all $T>0$.
On the other hand, since $(\r_i, \u_i, \d_i)_{i=1,2}$ are solutions to
\eqref{e1} with the same initial data, $(\dl\r, \dl\u, \dl \d)$ solves
\begin{equation}\label{52}
\begin{cases}
\partial_t\dl\r+\u_2\cdot\nabla\dl\r+\Lambda\dl h=\dl \mathcal{J},\\
\partial_t\dl h+\u_2\cdot\nabla\dl h-\nu\D\dl h-\Lambda\dl\r=\dl\mathcal{K},\\
\partial_t\dl\O+\u_2\cdot\nabla\dl\O-\D\dl\O=\dl \mathcal{L},\\
\partial_t\dl \d+\u_2\cdot\nabla\dl\d-\D\dl\d=\dl\mathcal{M},\\
\dl\u=-\Lambda^{-1}\nabla\dl h+\Lambda^{-1}\textrm{curl}\dl\O,
\end{cases}
\end{equation}
with
$$\dl \mathcal{J}=-\dl\u\cdot\nabla\r_1-\dl\r\Dv\u_2-(\r_1-1)\Dv\dl\u,$$
\begin{equation*}
\begin{split}
\dl\mathcal{K}&=\u_2\cdot\nabla\dl h+\Lambda^{-1}\Dv\Big(-\u_2\cdot\nabla\dl
\u-\dl\u\cdot\nabla\u_1-\left(\f{1}{\r_1}-\f{1}{\r_2}\right)\mathcal{A}\u_2+\left(\f{1}{\r_1}-1\right)\mathcal{A}\dl\u\\
&\quad-\f{1}{\r_1}\Dv\Big(\nabla(\d_2-\hat{\d})\odot\nabla\dl\d+\nabla\dl\d\odot\nabla(\d_1-\hat{\d})\\&\qquad-\f12I\Big(\nabla (\d_2-\hat{\d}):\nabla\dl\d+\nabla\dl\d:\nabla(\d_1-\hat{\d})\Big)\Big)\\
&\quad-\left(\f{1}{\r_1}-\f{1}{\r_2}\right)\Dv\Big(\nabla(\d_2-\hat{\d})\odot\nabla(\d_2-\hat{\d})-\f12|\nabla(\d_2-\hat{\d})|^2I\Big)\Big),
\end{split}
\end{equation*}
\begin{equation*}
\begin{split}
\dl\mathcal{L}&=\u_2\cdot\nabla\dl \O+\Lambda^{-1}\textrm{curl}\Big(-\u_2\cdot\nabla\dl
\u-\dl\u\cdot\nabla\u_1-\left(\f{1}{\r_1}-\f{1}{\r_2}\right)\mathcal{A}\u_2+\f{1}{\r_1}\mathcal{A}\dl\u\\
&\quad-\f{1}{\r_1}\Dv\Big(\nabla(\d_2-\hat{\d})\odot\nabla\dl\d+\nabla\dl\d\odot\nabla(\d_1-\hat{\d})\\&\qquad-\f12I\Big(\nabla (\d_2-\hat{\d}):\nabla\dl\d+\nabla\dl\d:\nabla(\d_1-\hat{\d})\Big)\Big)\\
&\quad-\left(\f{1}{\r_1}-\f{1}{\r_2}\right)\Dv\Big(\nabla(\d_2-\hat{\d})\odot\nabla(\d_2-\hat{\d})-\f12|\nabla(\d_2-\hat{\d})|^2I\Big)\Big),
\end{split}
\end{equation*}
and
\begin{equation*}
\begin{split}
\dl\mathcal{M}&=|\nabla(\d_1-\hat{\d})|^2\dl\d+(\nabla\dl\d:\nabla(\d_1-\hat{\d})+\nabla(\d_2-\hat{\d}):\nabla\dl\d)\d_2,
\end{split}
\end{equation*}
where we used the notation $A:B=\sum_{i,j=1}^3A_{ij}B_{ij}$.

Applying Proposition \ref{p1} and Proposition \ref{p2} to the system \eqref{52}, we get
\begin{equation}\label{1531}
 \begin{split}
&\|(\dl\r,\dl\u,\dl\d)\|_{\mathfrak{B}_T^{\f12}}\\
&\lesssim \exp\Big(C\|\u_2\|_{L^1_T(B^{\f52})}\Big)\Big(\|\dl\mathcal{J}\|_{L_T^1(\tilde{B}^{-\f{1}{2},\f{1}{2}})}
+\|\dl\mathcal{K}\|_{L_T^1(B^{-\f{1}{2}})} +\|\dl\mathcal{L}\|_{L_T^1(B^{-\f{1}{2}})}\\
&\quad+\|\dl\mathcal{M}\|_{L_T^1(\tilde{B}^{-\f{1}{2},\f{1}{2}})}\Big).
 \end{split}
\end{equation}

Since $(\dl\r, \dl\u, \dl\d)\in \mathfrak{B}_T^{\f{3}{2}}$, $\partial_t(\r_i-1)\in L^1_{loc}(B^{\f12})$, and hence $\r_i-1\in C(B^{\f12})\cap L^\infty(B^{\f32})$. This entails $\r_i-1\in C([0,\infty)\times\R^3)$. On the other
hand, if $\eta$ is sufficiently small, we have
$$|\r_1(t,x)-1|\le \f14
\quad\textrm{for all}\quad t\ge 0\textrm{ and }x\in\R^3.$$
Continuity in time for $\r_2-1$ thus yields the existence of a time $T>0$ such that
$$\|\r_i(t)-1\|_{L^\infty}\le \f12 \quad\textrm{for}\quad i=1,2\textrm{  and  }t\in[0,T].$$
Repeating the argument in Section 5, we easily infer that
\begin{equation*}
 \begin{split}
\|\dl\mathcal{J}\|_{L_T^1(\tilde{B}^{-\f{1}{2},\f{1}{2}})}\lesssim \|\r_1-1\|_{L^\infty_T(\tilde{B}^{\f12,\f32})}\|\dl\u\|_{L^1_T(B^{\f32})}+\|\Dv\u_2\|_{L^1_T(B^{\f32})}\|\dl\r\|_{L^\infty_T(\tilde{B}^{-\f{1}{2},\f{1}{2}})},
 \end{split}
\end{equation*}
\begin{equation*}
 \begin{split}
&\|\dl\mathcal{K}\|_{L_T^1(B^{-\f{1}{2}})}+\|\dl\mathcal{L}\|_{L_T^1(B^{-\f{1}{2}})}\\&\quad\lesssim \|\u_2\|_{L^2_T(B^{\f32})}\|\nabla\dl\u\|_{L^2_T(B^{-\f12})}+\|\dl\u\|_{L^2_T(B^{\f12})}\|\nabla\u_1\|_{L^2_T(B^{\f12})}
\\
&\qquad+\Big(1+\|\r_1-1\|_{L^\infty_T(B^{\f32})}+\|\r_2-1\|_{L^\infty_T(B^{\f32})}\Big)\|\nabla^2\u_2\|_{L^1_T(B^{\f12})}\|\dl\r\|_{L^\infty_T(B^{\f12})}\\
&\qquad+\Big(1+\|\r_1-1\|_{L^\infty_T(B^{\f32})}\Big)\|\nabla^2\dl\u\|_{L^1_T(B^{-\f12})}\\
&\qquad+\Big(1+\|\r_1-1\|_{L^\infty_T(B^{\f32})}+\|\r_2-1\|_{L^\infty_T(B^{\f32})}\Big)\||\nabla(\d_2-\hat{\d})|^2\|_{L^1_T(B^{\f32})}\|\dl\r\|_{L^\infty_T(B^{\f12})}\\
&\qquad+\Big(1+\|\r_1-1\|_{L^\infty_T(B^{\f32})}\Big)\|\dl\d\|_{L^2_T(\tilde{B}^{\f12,\f32})}\Big(\|\d_1-\hat{\d}\|_{L^2_T(\tilde{B}^{\f32,\f52})}+\|\d_2-\hat{\d}\|_{L^2_T(\tilde{B}^{\f32,\f52})}\Big).
 \end{split}
\end{equation*}
and
\begin{equation*}
 \begin{split}
\|\dl\mathcal{M}\|_{L_T^1(\tilde{B}^{-\f{1}{2},\f{1}{2}})}&\lesssim \|\dl\d\|_{L^\infty_T(\tilde{B}^{-\f12,\f12})}\|\nabla(\d_1-\hat{\d})\|^2_{L^2_T(\tilde{B}^{\f32,\f52})}\\
&\quad+\Big(1+\|\d_2-\hat{\d}\|_{L^\infty_T(B^{\f32})}\Big)\Big(\|\nabla(\d_2-\hat{\d})\|_{L_T^1(B^{\f32})}\\
&\qquad+\|\nabla(\d_1-\hat{\d})\|_{L_T^1(B^{\f32})}\Big)\|\nabla\dl\d\|_{L^\infty_T(\tilde{B}^{-\f12,\f12})}.
 \end{split}
\end{equation*}
Substituting those estimates back into \eqref{1531}, we eventually get
$$\|(\dl\r,\dl\u,\dl\d)\|_{\mathfrak{B}_T^{\f12}}\le Z(T)\|(\dl\r,\dl\u,\dl\d)\|_{\mathfrak{B}_T^{\f12}}$$
with
\begin{equation*}
 \begin{split}
Z(T)&=\exp\Big(C\|\u_2\|_{L^1_T(B^{\f52})}\Big)\Big[\|\r_1-1\|_{L^\infty_T(\tilde{B}^{\f12,\f32})}+\|\Dv\u_2\|_{L^1_T(B^{\f32})}+\|\u_2\|_{L^2_T(B^{\f32})}\\
&\quad+\|\nabla\u_1\|_{L^2_T(B^{\f12})}+\Big(1+\|\r_1-1\|_{L^\infty_T(B^{\f32})}+\|\r_2-1\|_{L^\infty_T(B^{\f32})}\Big)\|\nabla^2\u_2\|_{L^1_T(B^{\f12})}\\
&\quad+\Big(1+\|\r_1-1\|_{L^\infty_T(B^{\f32})}\Big)\|\nabla(\d_1-\hat{\d})\|^2_{L^2_T(\tilde{B}^{\f32,\f52})}\\
&\quad+\Big(1+\|\r_1-1\|_{L^\infty_T(B^{\f32})}+\|\r_2-1\|_{L^\infty_T(B^{\f32})}\Big)\||\nabla(\d_2-\hat{\d})|^2\|_{L^1_T(B^{\f32})}\\
&\quad+\Big(1+\|\r_1-1\|_{L^\infty_T(B^{\f32})}\Big)\Big(\|\d_1-\hat{\d}\|_{L^2_T(\tilde{B}^{\f32,\f52})}+\|\d_2-\hat{\d}\|_{L^2_T(\tilde{B}^{\f32,\f52})}\Big)\\
&\quad+\Big(1+\|\d_2-\hat{\d}\|_{L^\infty_T(B^{\f32})}\Big)\Big(\|\nabla(\d_2-\hat{\d})\|_{L_T^1(B^{\f32})}+\|\nabla(\d_1-\hat{\d})\|_{L_T^1(B^{\f32})}\Big)\Big].
 \end{split}
\end{equation*}
We notice that $\limsup_{T\rightarrow 0^+}Z(T)\le
C\|\r_1-1\|_{L^\infty(\tilde{B}^{\f12,\f32})}$. This is because all
other terms involve an integral in time in $L^1$ or $L^2$ sense so that as $T$ goes to zero, all those integrals will converge to
zero. Thus, if $\eta>0$ is sufficiently small, we get
$$\|(\dl\r,\dl\u,\dl\d)\|_{L^\infty_T(B^{\f12})}=0$$ for certain $T>0$
small enough.
Thus, we have shown the uniqueness on a small time interval $[0,T]$ such that $(\r_1,\u_1, \d_1)=(\r_2,\u_2, \d_2)$.

Then we can argue as in \cite{RD2} for the compressible Navier--Stokes equations.
Let $T_{max}<+\infty$ be the largest time such that
the two solutions coincide on $[0, T_{max}]$. Taking $T_{max}$ as the initial time, we denote
$$(\widehat{\r_i}(t), \widehat{\u_i}(t),\widehat{\d_i}(t))\overset{def}=(\r_i(t-T_{max}), \u_i(t-T_{max}),
\d_i(t-T_{max})).$$ Repeating the above arguments and using the fact
that $\|\widehat{\r_i}(0)-1\|_{L^\infty}\le\f{1}{4}$, we can prove that
$$(\widehat{\r_1}(t),\widehat{\u_1}(t),\widehat{\d_1}(t))=(\widehat{\r_2}(t), \widehat{\u_2}(t),\widehat{\d_2}(t))$$
on a sufficiently small interval $[0,\iota]$ with $\iota>0$. This
contradicts the assumption that $T_{max}$ is the largest time such
that the two solutions coincide. Thus, $T_{max}=+\infty$ which means that the
uniqueness result holds in $\R^+$.

\bigskip


\begin{thebibliography}{99}

\bibitem{RD} Bahouri, H., Chemin, J., Danchin, R.:
\emph{Fourier analysis and nonlinear partial differential equations},
Grundlehren der Mathematischen Wissenschaften, {\bf 343}, Springer, Heidelberg, 2011.

\bibitem{BP} Bourgain, J., Pavlović, N.:
\emph{Ill-posedness of the Navier--Stokes equations in a critical space in 3D},
J. Funct. Anal., {\bf 255} (2008), 2233--2247.


\bibitem{RD2} Danchin, R.:
\emph{Global existence in critical spaces for compressible
Navier--Stokes equations}, Invent. Math., {\bf 141} (2000),
579--614.

\bibitem{Gen} De Gennes, G., Prost, J.:
\emph{The Physics of Liquid Crystals},  Oxford University Press,
New York, 1993.

\bibitem{DWW} Ding, S.-J., Wang C.-Y., Wen, H.-Y.:
\emph{Weak solution to compressible hydrodynamic flow of liquid crystals in dimension one}, Discrete Conti.
Dyna. Sys. Ser. B, \textbf{15}(2) (2011), 357--371.


\bibitem{DLWW} Ding, S.-J.,  Lin J.-Y.,  Wang C.-Y.,  Wen, H.-Y.:
\emph{Compressible hydrodynamic flow of liquid crystals in 1-D}, Discrete Conti.
Dyna. Sys., {\bf 32}(2) (2012),  539--563.


\bibitem{Eri}
Ericksen, L.: \emph{Conservation laws for liquid crystals},
Trans. Soc. Rheology, {\bf 5} (1961), 23--34.

\bibitem{Eri2}
Ericksen, L.: \emph{Continuum theory of nematic liquid crystals},
Res. Mechanica, \textbf{21} (1987), 381--392.

\bibitem{HK}
Hardt, R., Kinderlehrer, D.: \emph{Mathematical Questions of
Liquid Crystal Theory},  The IMA Volumes in Mathematics and its
Applications \textbf{5}, Springer-Verlag, New York, 1987.

\bibitem{HKL}
Hardt, R., Kinderlehrer, D., Lin, F.-H.: \emph{Existence and partial
regularity of static liquid crystal configurations}, Comm. Math.
Phys., \textbf{105} (1986), 547--570.

\bibitem{Hong} Hong, M.-C.: \emph{Global existence of solutions of the
simplified Ericksen--Leslie system in dimension two}, Calc. Var. Partial Differential Equations, {\bf 40} (2011), 15--36.

\bibitem{WC1} Huang, T., Wang C.-Y., Wen H.-Y.: \emph{Blow up criterion for compressible nematic liquid crystal flows in dimension three}, Arch. Rational Mech. Anal., (2011),
DOI: 10.1007/s00205-011-0476-1.

\bibitem{HWW2} Huang, T., Wang C.-Y., Wen H.-Y.: \emph{Strong solutions of the compressible nematic liquid crystal flow}, J. Differential Equations, {\bf 252}(3)  (2012), 2222--2265.

\bibitem{JJW} Jiang, F., Jiang, S., Wang, D.: \emph{Global Weak Solutions to the Equations of Compressible Flow of
Nematic Liquid Crystals in Two Dimensions}. arXiv:1210.3565.

\bibitem{Kato64} Fujita, H., Kato, T.:
\emph{On the Navier--Stokes initial value problem. I.},
Arch. Rational Mech. Anal., {\bf 16} (1964), 269--315.

\bibitem{Leslie1}
Leslie, F.: \emph{Some constitutive equations for liquid
crystals}, Arch. Rational Mech. Anal., {\bf 28} (1968), 265--283.

 \bibitem{Leslie2}
Leslie, F.: \emph{Theory of flow phenomenum in liquid crystals},
 in the Theory of Liquid Crystals, \textbf{4}, Academic Press, (1979), 1--81.

\bibitem{Lin2}
Lin, F.-H.:  \emph{Nonlinear theory of defects in nematic liquid
crystals; phase transition and flow phenomena},  Comm. Pure. Appl.
Math., {\bf 42} (1989), 789--814.

\bibitem{LL} Lin, F.-H., Liu, C.:
\emph{Nonparabolic dissipative systems modeling the flow of liquid
crystals}, Comm. Pure Appl. Math., {\bf 48} (1995), 501--537.

\bibitem{LL96} Lin, F.-H, Liu, C.: \emph{Partial regularities of the nonlinear dissipative
systems modeling the flow of liquid crystals}, Discrete Conti. Dyna.
Sys., \textbf{2} (1996), 1--23.

\bibitem{LLW} Lin, F.-H., Liu, J.-Y., Wang, C.-Y.: \emph{Liquid crystal
flows in two dimensions}, Arch. Ration. Mech. Anal., {\bf 197}
(2010), 297--336.

\bibitem{LW} Liu, C., Walkington, N.: \emph{Approximation of liquid crystal flow},
SIAM J.  Numer.  Anal., {\bf 37} (2000), 725--741.

\bibitem{S01} Shkoller, S.: \emph{Well-posedness and global attractors for
    liquid crystals on Riemannian manifolds}, Comm. Partial Differential
    Equations, \textbf{27} (2001), 1103--1137.

\bibitem{WC} Wang, C.-Y.:
\emph{Well-posedness for the heat flow of harmonic maps and the
liquid crystal flow with rough initial data}, Arch. Ration. Mech.
Anal., {\bf 200} (2011), 1--19.

\bibitem{W10} Wu, H.: \emph{Long-time behavior for nonlinear hydrodynamic
system modeling the nematic liquid crystal flows},  Discrete Conti.
Dyna. Sys., \textbf{26}(1) (2010), 379--396.


\end{thebibliography}
\end{document}